\documentclass[reqno,10pt]{amsart}
\usepackage{amscd,amssymb,amsmath,color,url,stmaryrd}
\usepackage{extarrows}
\usepackage{latexsym}
\usepackage[all]{xy}
\usepackage{defs}
\usepackage[colorlinks=true]{hyperref}
\usepackage{tikz-cd}
\usepackage{forest}

\def\cAcirc{\calAcirc}
\def\sp{{\rm sp}}
\def\..{{,\ldots,}}
\def\Rad{{\rm Rad}}
\def\CC{{\bbC}}
\def\NN{{\bbN}}

\def\QQ{{\bbQ}}
\def\RR{{\bbR}}

\def\cM{{\calM}}
\def\cA{{\calA}}
\def\cB{{\calB}}

\def\cC{{\calC}}

\def\cH{{\calH}}

\def\cM{{\calM}}

\def\dim{{\rm dim}}

\begin{document}
\title{Geometrically multiplicative non-archimedean norms}
\author{Michael Temkin}
\address{Einstein Institute of Mathematics\\
               The Hebrew University of Jerusalem\\
                Edmond J. Safra Campus, Giv'at Ram, Jerusalem, 91904, Israel}
\email{michael.temkin@mail.huji.ac.il}

\author{Grigory Zutler}
\address{Einstein Institute of Mathematics\\
               The Hebrew University of Jerusalem\\
                Edmond J. Safra Campus, Giv'at Ram, Jerusalem, 91904, Israel}
\email{grigory.zutler@mail.huji.ac.il}

\thanks{This research was supported by MPIM-Bonn and ISF Grant 1203/22}

\begin{abstract}
Universally (or geometrically) multiplicative norms on Banach algebras over complete non-archimedean fields were used by Berkovich in his works on non-archimedean geometry, and later they were studied in some detail by Poineau. In this paper, we perform a more thorough study of the question when multiplicativity, spectrality and spectral multiplicativity of norms on algebras over real valued fields are preserved by ground field extensions. We obtain precise criteria quite analogous to the classical theory of geometric irreducibility and reducedness. In particular, we generalize the results of Poineau in a few aspects.
\end{abstract}

\keywords{Normed rings, geometrically multiplicative norms}
\maketitle

\setcounter{tocdepth}{1}
\tableofcontents

\section{Introduction}

\subsection{Background and motivation}
All rings in this paper are assumed to be commutative and all seminorms are non-archimedean. Banach rings and their spectra are building blocks of Berkovich analytic geometry. Our goal is to study finer properties of norms, not preserved by equivalence of Banach rings. In addition it is often convenient to work with non-complete rings and apply completion only when needed, so this paper is written in the generality of normed or even seminormed rings over (non-archimedean) real valued fields. One can always divide a seminormed ring or module by the kernel of the seminorm and pass to the normed world, but seminorms show up naturally as outcomes of such constructions as the spectral seminorm and the tensor seminorm. For this reason we prefer to consider the seminormed case too.

Here are three very important properties a seminorm $|\ |$ can satisfy:
\begin{itemize}
\item[(i)] $|\ |$ is {\em multiplicative} if $|ab|=|a|\cdot|b|$ for any $a,b\in\cA$.
\item[(ii)] $|\ |$ is {\em power-multiplicative} or {\em spectral} if $|a^n|=|a|^n$ for any $a\in\cA$ $n\ge 0$.
\item[(iii)] $|\ |$ is {\em spectrally multiplicative} if $|\ |_\sp$ is multiplicative.
\end{itemize}
Recall that the spectral seminorm $|\ |_\sp$ defined by $|a|_\sp=\lim_n|a^n|^{1/n}$ is the minimal power-multiplicative seminorm dominated by $|\ |$, so condition (ii) means that $|\ |=|\ |_\sp$.

When working over a real valued ground field $k$ it is also natural to consider geometric or universal variants of these notions. Namely, we say that a seminorm on a $k$-algebra $\cA$ is {\em geometrically} multiplicative, spectral or spectrally multiplicative if the tensor seminorm on $\cA_l=\cA\otimes_kl$ is {\em geometrically} multiplicative, spectral or spectrally multiplicative, respectively, for any real valued field extension $l/k$. Geometrically multiplicative norms were used by Berkovich to study group actions on analytic spaces, and he called the points they define {\em peaked points} in \cite{berbook}. Poineau called them {\em universal norms} in \cite{Poineau} and proved that any multiplicative norm over an algebraically closed field is universal. To the best of our knowledge no further study of universality was done in the literature. The goal of this paper is to fill in this gap and develop the theory of geometric spectrality and spectral multiplicativity quite analogous to the classical theory of geometric reducedness and irreducibility of schemes. The combination of these two yields the desired criterion of geometric multiplicativity.

\subsection{Normed algebra}
We will work in the category of normed (or even seminormed) rings (and modules) with non-expansive homomorphisms. Seminormed rings will be denoted by calligraphic letters, e.g. $\cA=(\cA,|\ |)$, and we say that $\cA$ is {\em spectral}, {\em multiplicative} or {\em spectrally multiplicative} if its seminorm is so. In fact, these properties should be viewed as extensions to the theory of seminormed rings of the classical properties from commutative algebra -- reducedness, being a domain and having an integral reduction. This analogy will be used throughout the paper, but here are a couple of its instances.

\begin{itemize}
\item[(o)] A ring $A$ is reduced (resp. integral) if and only if the norm sending all non-zero elements to 1 is power-multiplicative (resp. multiplicative).

\item[(i)] A norm on $\cA$ is power-multiplicative (resp. multiplicative) if and only if the associated graded ring $\cA_\gr=\oplus_{r>0}\cA_{\le r}/\cA_{<r}$ is reduced (resp. integral).

\item[(ii)] Each normed ring $\cA$ possess a universal homomorphism $\phi\:\cA\to\cA^\sp$ whose target is a spectral normed ring $\cA^\sp$ that will be called the {\em spectralization} of $\cA$. Furthermore, $\calM(\cA^\sp)=\calM(\cA)=X$, $|\ |_{\cA^\sp}=\max_{x\in X}|\ |_x$ and the kernel of $\phi$ consists of all quasi-nilpotent elements, i.e. elements with $|f|_\sp=\max_{x\in X}|f|_x=0$. This is the analogy of the reduction homomorphism $A\to\tilA=A/\Rad(A)$ in commutative algebra and of the fact that $\Rad(A)=\cap_{p\in\Spec(A)}p$ and the reduction induces homeomorphism of spectra.

\item[(iii)] A seminormed ring $\cA$ is spectrally multiplicative if and only if $\cM(\cA)$ contains a unique maximal point $|\ |_x$. A seminormed ring is multiplicative if and only if it is spectral and spectrally multiplicative. This is the analogue of the fact that a ring $A$ is a domain if and only if it is reduced and $\Spec(A)$ is irreducible.
\end{itemize}

\subsection{Main results}
We will study criteria for normed $k$-algebras to be geometrically spectral and geometrically spectrally multiplicative. It turns out that the theory is very analogous to its classical commutative algebra counterpart. For instance, if $k$ is of equal characteristic $p$, then the seminorm of $\cA$ is geometrically spectral if and only if the tensor seminorm on $\cA\otimes k^{1/p}$ is spectral. Moreover, one can replace $k^{1/p}$ by its deformations, called $p$-versal extensions of $k$, and then a similar criterion also applies in the fixed characteristic, see Theorem~\ref{spectralth}. Also, we prove that $\cA$ is geometrically spectrally multiplicative if and only if it is spectrally multiplicative and $\hatk$ is separably closed in the completion of $\Frac(\cA^\sp)$, see \ref{multiplicativeth}. Combining these two results one deduces in Theorem~\ref{multth} a criterion for $\calA$ to be geometrically multiplicative. In particular, we reprove the result of Poineau and show the stronger result that over a perfectoid base $k$ any spectral $k$-algebra $\cA$ is geometrically spectral and geometric multiplicativity holds if and only if $\cA$ is multiplicative and $k$ is algebraically closed in the completion of $\Frac(\cA)$.

We will see that the main case to study is when $l/k$ is finite. If $l/k$ is defectless, then the graded reduction completely controls the ground field extension and can be used to formally reduce the problem to the classical commutative algebra. However in the general case, instead of this we will have to work with reductions of a small but non-zero thickness $\cAcirc/\pi\cAcirc$ for a pseudo-uniformizer $\pi\in k$ whose valuation is close enough to 1. This imposes mild technical complications as we have to track the thickness (e.g. under Frobenius), but allows us to exploit the analogy with the classical results, and construct the proofs along the same general lines.

\subsection{Conventions}
For simplicity, we will omit the word ``generalized'' in what one often calls generalized Gauss valuation or extension. By default, we provide tensor products of seminormed rings and modules with tensor product seminorm. If $l/k$ is a real valued field extension and $\cA$ is a seminormed $k$-algebra, we will use the notation $\cA_l=\cA\otimes_kl$.

\setcounter{tocdepth}{1}

\tableofcontents

\section{Analytic spectrum}

\subsection{Berkovich analytic spectrum}
Berkovich introduced analytic spectrum only for Banach rings, though this is mainly a matter of taste. As in the theory of Huber's adic spectra, the definition makes sense more generally, but the outcome depends only on the completion viewed as a Banach ring. Thus, by Berkovich spectrum of a seminormed ring $\cA$ we mean the set $X=\cM(\cA)$ of all bounded real semivaluations on $\cA$ with the weakest topology making all functions $|f|\:X\to\RR_{\ge 0}$ with $f\in\cA$ continuous. This is a contravariant functor on the category of seminormed rings.

The points of $X$ will denoted $x$ or $|\ |_x$. The completed residue field $\cH(x)$ is the completed field of fractions of the multiplicative normed ring $\cA/\Ker(|\ |_x)$. Given a homomorphism $\cA\to\cB$ of seminormed rings we say that the induced map $\cM(\cB)\to\cM(\cA)$ is an {\em isomorphism}\footnote{The notion {\em spectral isomorphism} would be more precise, but we do not provide $\cM(\cA)$ with any finer structure in this paper.} if it is a homeomorphism and induces isomorphism of completed residue fields. For example, the map $\cM(\hatcalA)\to\cM(\cA)$ is an isomorphism because of the factorizations $\cA\to\hatcalA\to\calH(x)$. Thus Berkovich's theorem  \cite[Theorem~1.2.1]{berbook} implies that $\cM(\cA)$ is compact and non-empty whenever $\cA\neq 0$.

\subsection{Spectralization}
By the {\em spectralization} of $\cA$ we mean the normed ring $\cA^\sp=\cA/\Ker(|\ |_\sp)$, whose norm is induced by the spectral seminorm $|\ |_\sp$ of $\cA$ and denoted by the same notation. This procedure is the analogue of reduction in commutative algebra. Recall that $|\ |_\sp=\max_x|\ |_x$ by \cite[Theorem~1.3.1]{berbook}, which is the analogue of the classical fact that the nilradical equals the intersection of all prime ideals.

\begin{lem}\label{prelem}
(i) For any seminormed ring $\cA$ the map $\cA\to\cA^\sp$ induces an isomorphism of spectra.

(ii) For any seminormed $\cA$-algebras $\cB$ and $\cC$ one has that $$(\cB\otimes_\cA\cC)^\sp=(\cB^\sp\otimes_\cA\cC^\sp)^\sp.$$

(iii) A seminormed ring $\cA$ is spectrally multiplicative if and only if $\cM(\cA)$ possesses a single maximal point $x$ (i.e. $|\ |_x\ge|\ |_y$ for any $y\in\cM(\cA)$). In this case, $|\ |_\sp=|\ |_x$ and $\cH(x)$ is the completed fraction field of $\cA^\sp$.
\end{lem}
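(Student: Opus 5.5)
For (i), the plan is to show that a bounded semivaluation on $\cA$ is automatically bounded by $|\ |_\sp$. Indeed, if $|\ |_x\le C|\ |$ (or $\le|\ |$, depending on the convention for boundedness), then applying this to $f^n$ gives $|f|_x^n=|f^n|_x\le C|f^n|$. Taking $n$-th roots and letting $n\to\infty$ yields $|f|_x\le|f|_\sp$. Hence $|\ |_x$ vanishes on $\Ker(|\ |_\sp)$ and descends to a bounded semivaluation on $\cA^\sp$. Conversely, any bounded semivaluation on $\cA^\sp$ pulls back to one on $\cA$, because $|\ |_\sp\le|\ |$. This gives a bijection $\cM(\cA^\sp)\to\cM(\cA)$. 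The topologies agree because both are the weakest topologies making the functions $|f|$ continuous, and these functions correspond under the surjection $\cA\to\cA^\sp$. The completed residue fields agree because $\cA/\Ker|\ |_x$ and $\cA^\sp/\Ker|\ |_x$ are the same normed ring.

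For (ii), both sides are quotients of $\cB\otimes_\cA\cC$ by kernels of spectral seminorms, so it suffices to show that the two spectral seminorms coincide on $\cB\otimes_\cA\cC$. By \cite[Theorem~1.3.1]{berbook} each spectral seminorm is the maximum over its spectrum, so it is enough to identify the spectra. I will show that $\cM(\cB\otimes_\cA\cC)$ and $\cM(\cB^\sp\otimes_\cA\cC^\sp)$ coincide as sets of multiplicative seminorms on the algebraic tensor product; here $\cB^\sp\otimes_\cA\cC^\sp$ is a quotient of $\cB\otimes_\cA\cC$ with a smaller tensor seminorm. The key claim is that a multiplicative seminorm $|\ |_x$ on $\cB\otimes_\cA\cC$ is bounded by the tensor seminorm iff its restrictions to $\cB$ and $\cC$ are bounded, i.e. $|b\otimes c|_x\le|b||c|$. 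The forward direction is clear. For the converse, take a representation $\sum b_i\otimes c_i$ and use the non-archimedean inequality: $|\sum b_i\otimes c_i|_x\le\max|b_i|_x|c_i|_x\le\max|b_i||c_i|$; then take the infimum over representations. By (i), the restrictions being bounded on $\cB$ and $\cC$ is equivalent to being bounded on $\cB^\sp$ and $\cC^\sp$. The same argument then shows that such a point factors through, and is bounded on, $\cB^\sp\otimes_\cA\cC^\sp$, with the quotient map identifying the spectra. The main delicate point is the well-definedness of the descended seminorm on the quotient tensor product. This follows because the kernel of $\cB\otimes\cC\to\cB^\sp\otimes\cC^\sp$ is generated by $\Ker_\cB\otimes\cC+\cB\otimes\Ker_\cC$, on which $|\ |_x$ vanishes by multiplicativity.

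For (iii), use $|\ |_\sp=\max_{y}|\ |_y$. If $x$ is a maximal point dominating all others, then $|\ |_\sp=|\ |_x$, which is multiplicative. Conversely, if $|\ |_\sp$ is multiplicative, it is itself a bounded semivaluation, hence a point $x$ of $\cM(\cA)$, and it dominates every $|\ |_y$ by the argument in (i); uniqueness of such a maximum is clear. Finally, in this case $\Ker|\ |_x=\Ker|\ |_\sp$, so $\cA/\Ker|\ |_x=\cA^\sp$ with norm $|\ |_x$. By definition, $\cH(x)$ is then the completed fraction field of $\cA^\sp$.
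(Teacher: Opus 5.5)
Your proof is correct. For (i) and (iii) it is what the paper has in mind: the paper calls (i) obvious and derives (iii) from $|\ |_\sp=\max_x|\ |_x$. For (ii) you take a different route.

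You identify $\cM(\cB\otimes_\cA\cC)$ with $\cM(\cB^\sp\otimes_\cA\cC^\sp)$ by pullback. This uses (i) on each factor, the fact that a multiplicative seminorm is bounded by the tensor seminorm if and only if it is bounded on the factors, and the description of the kernel of $\cB\otimes_\cA\cC\to\cB^\sp\otimes_\cA\cC^\sp$. You then apply Berkovich's theorem $|\ |_\sp=\max_x|\ |_x$ to both rings.

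The paper stays at the level of seminorms. Let $q$ be the spectral seminorm of $\cB\otimes_\cA\cC$.
\begin{itemize}
\item $q$ is power-multiplicative and bounded by the tensor seminorm, so its restrictions to $\cB$ and $\cC$ are bounded by $|\ |_{\cB,\sp}$ and $|\ |_{\cC,\sp}$.
\item By submultiplicativity and the ultrametric inequality, $q$ is then bounded by the tensor seminorm of $\cB^\sp\otimes_\cA\cC^\sp$.
\item The spectral seminorm is the largest power-multiplicative seminorm dominated by a given one, so this gives one inequality.
\item The other inequality is trivial, because the tensor seminorm of $\cB^\sp\otimes_\cA\cC^\sp$ pulls back to a seminorm dominated by that of $\cB\otimes_\cA\cC$.
\end{itemize}

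The mechanism is the same as yours: boundedness on the factors propagates to the tensor product. The paper applies it to power-multiplicative seminorms rather than multiplicative ones. That keeps its proof elementary and independent of Berkovich's theorem, which rests on non-emptiness of spectra. Your version relies on that theorem but yields the equality of spectra directly as a by-product. The kernel description you use is also implicitly needed in the paper's argument, to see that $q$ vanishes on $\Ker(\cB\otimes_\cA\cC\to\cB^\sp\otimes_\cA\cC^\sp)$.
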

\begin{proof}
The first claim is obvious, the second one follows from the simple observation that the product norm $|\ |_{\cB,\sp}\otimes|\ |_{\cC,\sp}$ dominates any power-multiplicative norm on $\cB\otimes_\cA\cC$. Finally, (iii) follows from the cited above fact that $|\ |_\sp=\max_x|\ |_x$.
\end{proof}

As a consequence, studying geometric spectral multiplicativity reduces to the case of fields.

\begin{cor}\label{precor}
Assume that $\cA$ is a spectrally multiplicative seminormed $k$-algebra over a real valued field $k$, and let $K=\Frac(\cA^\sp)$ with the induced valuation. Let $l/k$ be a real valued extension, then $\cA_l$ is spectrally multiplicative if and only if $K_l=K\otimes l$ is spectrally multiplicative. In particular, $\cA$ is geometrically spectrally multiplicative if and only if $K$ is.
\end{cor}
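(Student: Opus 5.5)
The plan is to pass from $\cA$ to $\cA^\sp$, and then to $K$, in two steps. Each step uses Lemma~\ref{prelem}(ii) to show that the spectral seminorms agree, so that the spectra have the same maximal-point structure. Spectral multiplicativity of a seminormed ring $\cB$ depends only on $|\ |_{\cB,\sp}$, that is, only on $\cB^\sp$. By Lemma~\ref{prelem}(iii) it is equivalent to $\cM(\cB)$ having a unique maximal point.

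\emph{Step 1: reduce to $\cA^\sp$.} Applying Lemma~\ref{prelem}(ii) with base $k$ and $\cB=\cA$, $\cC=l$ gives
$$(\cA_l)^\sp=(\cA^\sp\otimes_k l^\sp)^\sp=((\cA^\sp)_l)^\sp,$$
since $l$ is a valued field and hence already spectral. Therefore $\cA_l$ is spectrally multiplicative if and only if $(\cA^\sp)_l$ is. So we may assume that $\cA=\cA^\sp$. This is a normed ring with multiplicative norm $|\ |_x$, by Lemma~\ref{prelem}(iii), and in particular an integral domain.

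\emph{Step 2: compare $\cA_l$ with $K_l$.} Here $K=\Frac(\cA)$ carries the extended valuation, and $\cA\subset K$ is an isometric embedding. I will use two facts about the tensor seminorm over a field.
\begin{itemize}
\item[(a)] The map $\cA_l\to K_l$ is isometric for the tensor norms. This is standard for tensor products over a valued field with an isometric inclusion of normed $k$-spaces. If it is not available in full generality, I would argue via orthogonal-type bases or by approximation, using that any finite-dimensional subspace can be handled, since the tensor norm is an infimum over representations.
\item[(b)] $K_l$ is the localization $S^{-1}\cA_l$, where $S=\cA\setminus 0$. For $s\in S$, multiplication by $s$ scales the tensor norm exactly by $|s|$. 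Indeed, $|s\cdot|\le|s|\,|\cdot|$ holds trivially, and applying the same bound to $s^{-1}$ in $K_l$ gives the reverse inequality, using (a).
\end{itemize}

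From (a), $\cM(K_l)\to\cM(\cA_l)$ is well defined by restriction. Conversely, take a bounded semivaluation $y$ on $\cA_l$ with $|f|_y\le|f|$. Then for $s\in S$ we have $|s|_y\ge$ ? In fact we need $|s|_y=|s|$ in order to extend $y$ to $K_l$ by $|f/s|_y=|f|_y/|s|_y$, while keeping it bounded.

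This is the main obstacle: points of $\cM(\cA_l)$ may kill elements of $S$ or shrink them. Such points do not extend to $K_l$, and the two spectra genuinely differ. However, only maximal points matter. Suppose $\cA_l$ is spectrally multiplicative with maximal point $y$. Then $|s|_y=|s|_{\sp}=|s|$, because $s\otimes1$ has spectral norm $|s|$: the tensor norm of $s\otimes 1$ is $|s|$, and it is power-multiplicative since $|\ |$ on $\cA$ is multiplicative. So $y$ extends to $K_l$, and it dominates every point there by (b) and scaling.

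In general, I would show directly that spectral seminorms satisfy $|f/s|_{K_l,\sp}=|f|_{\cA_l,\sp}/|s|$. This follows from (b) applied to powers: $|(f/s)^n|=|f^n|/|s|^n$. Hence $(K_l)^\sp$ is the localization of $(\cA_l)^\sp$ with norm scaled compatibly. Multiplicativity of a seminorm is preserved and reflected under such a localization with multiplicative scaling: given $g=f/s$ and $g'=f'/s'$, we have $|gg'|=|ff'|/|ss'|$. Combined with Step 1, this proves the equivalence.

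The final claim follows by taking all $l$.
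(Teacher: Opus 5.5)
Your proposal is correct and follows the paper's proof. You reduce to $\cA=\cA^\sp$ via Lemma~\ref{prelem}(ii), and then use that $\cA_l\to K_l$ is an isometry. That is exactly Lemma~\ref{isomlem} applied to $\cA\into K$, so no hedging is needed. It follows that multiplication by $s\in\cA\setminus 0$ scales the tensor norm by $|s|$, hence $|f/s|_{K_l,\sp}=|f|_{\cA_l,\sp}/|s|$, and multiplicativity transfers in both directions. The detour through maximal points of $\cM(\cA_l)$ is unnecessary and can be dropped.
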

\begin{proof}
By Lemma~\ref{prelem} $\cA_l^\sp=(\cA^\sp\otimes_k l)^\sp$, so $\cA_l$ is spectrally multiplicative if and only if $\cA^\sp\otimes_k l$ is. Therefore it is enough to prove the claim when $\cA=\cA^\sp$, that is, the norm of $\cA$ is multiplicative. It is a classical result (and also follows from Lemma~\ref{isomlem} below) that $\cA\into\cA_l$ is an isometry, and hence this is also true for $\cA\into(\cA_l)^\sp$.

Note that  each $a\neq 0$ in $\cA$ satisfies $|a|\cdot|a^{-1}|=1$ in $K$, and hence any seminorm on $\cA_l$ which extends the norm on $\cA$ uniquely extends to $K_l$ by the rule $|x/a|_{K_l}=|x|_{\cA_l}/|a|$ for $x\in\cA_l$. In particular, this applies both to the tensor seminorm on $\cA_l$ and the associated spectral seminorm. Since the seminorm of $K_l^\sp$ is such an extension of the seminorm of $\cA_l^\sp$ (follows from Lemma~\ref{isomlem} below) we immediately obtain that one of them is multiplicative if and only if the other one is multiplicative.


\end{proof}

\subsection{Unibranchness}
Our choice to work in the generality of non-complete and even non-henselian valued fields forces us to distinguish abstract algebraic extensions of real valued fields, which may have a few extensions of valuations, and real valued extensions, where an extension is fixed. We say that an algebraic extension $l/k$ is {\em unibranch} if the extension is unique. Also, we say that a finite extension of real valued fields $l/k$ is {\em defectless} if $[l:k]=e_{l/k}f_{l/k}$. In particular, in this case $l/k$ is unibranch. This should not be confused with the more usual definition that an abstract extension $l/k$ is defectless when $[l:k]=\sum_i e_if_i$, where the sum is over all extensions of the valuation.

\begin{exam}
Assume that $k$ is a real valued field and $l/k$ a finite extension. Let $|\ |_1\..|\ |_r$ be all extensions of the valuation of $k$ to $l$, and let $l_i=(l,|\ |_i)$ be the corresponding real valued fields. Provide $l$ with any $k$-norm $\|\ \|$ which is equivalent to a cartesian norm (as recalled below). Then $\|\ \|_\sp$ is the classical spectral norm $\max_i|\ |_i$ and hence $\hatl=\prod_i\hatl_i$ and $\cM(l)=\coprod_i\cM(l_i)=\coprod_i\cM(\hatl_i)$. This is essentially equivalent (and follows from) the classical fact from commutative algebra that the integral closure of $\kcirc$ in $l$ is $\lcirc=\cap_i\lcirc_i$, and it is the unit ball of $\|\ \|_\sp$.

The extension is unibranch if and only if $\|\ \|_\sp$ is a valuation. From the analytic point of view non-unibranch abstract extensions (with the spectral norm) are non-local object, as opposed to a real-valued extension $l_i/k$, where a specific extension is fixed. If $l/k$ is not unibranch, $l$ is not $k$-cartesian with respect to any single valuation $|\ |_i$. The branches are separated analytically or \'etale-locally: $l^h=k^h\otimes l=\prod_i l_i^h$, where $l^h$ is the fraction field of the henselization of $\lcirc$. The henselian factorization is finer, because $\hatl$ is the quotient of $\hatk\otimes l$ by the kernel of the spectral seminorm, which can be non-trivial.
\end{exam}

\subsection{Graded reduction and orthogonality}
Recall that a vector space $V$ over a real valued field $k$ is called {\em cartesian} if it possesses an {\em orthogonal} basis $\{e_i\}_{i\in I}$, which means that $\|\sum_i c_ie_i\|=\max_i\|c_ie_i\|$. A finite real-valued extension $l/k$ is defectless if and only if $[\till_\gr:\tilk_\gr]=[l:k]$ if and only if $l$ is cartesian, where $k_\gr=\oplus_{r>0}\tilk_r=\oplus_{r>0}k_{\le r}/k_{<r}$ is the {\em graded reduction} of $k$ as defined in \cite[\S2]{Temkin-local-properties}. Recall that $\tilk_\gr$ is a graded field (i.e. 0 and (1) are the only homogeneous ideals). More generally, note that for any normed $k$-vector space $V$ the graded reduction $\tilV_\gr$ is a graded $\tilk_\gr$-vector space and the same argument as for $l/k$ shows that if $V$ is finite-dimensional, then it is cartesian if and only if the fundamental inequality $\dim_{\tilk_\gr}(\tilV_\gr)\le \dim_k(V)$ is an equality.

\begin{lem}\label{grlem0}
Let $K/k$ be an extension of real valued fields and let $V$ be a finite-dimensional cartesian $k$-vector space. Then $U=V_K=V\otimes_kK$ is cartesian and $\tilU_\gr=\tilV_\gr\otimes_{\tilk_\gr}\tilK_\gr$.
\end{lem}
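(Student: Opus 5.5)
Fix an orthogonal basis $e_1\..e_n$ of $V$, so that $\|\sum_i c_ie_i\|=\max_i|c_i|\,\|e_i\|$ for $c_i\in k$. The plan is to show that the same basis $e_i\otimes 1$ is orthogonal in $U=V_K$ with respect to the tensor seminorm. The graded statement then follows by comparing dimensions through the fundamental inequality.

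\emph{Step 1 (upper bound).} Every $u\in U$ can be written uniquely as $u=\sum_i a_ie_i$ with $a_i\in K$. Using this particular representation in the definition of the tensor seminorm, $\|u\|=\inf\max_j|b_j|\,\|v_j\|$ over all representations $u=\sum_j v_j\otimes b_j$, gives $\|u\|\le\max_i|a_i|\,\|e_i\|$.

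\emph{Step 2 (lower bound).} This is the main point. Take an arbitrary representation $u=\sum_j v_j\otimes b_j$ with $v_j\in V$ and $b_j\in K$. Expand each $v_j=\sum_i c_{ij}e_i$ with $c_{ij}\in k$. Orthogonality in $V$ gives $|c_{ij}|\,\|e_i\|\le\|v_j\|$ for all $i,j$. Comparing coefficients, $a_i=\sum_j c_{ij}b_j$, so by the ultrametric inequality
$$|a_i|\,\|e_i\|\le\max_j|c_{ij}|\,|b_j|\,\|e_i\|\le\max_j|b_j|\,\|v_j\|.$$
Taking the infimum over all representations yields $\max_i|a_i|\,\|e_i\|\le\|u\|$. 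Together with Step 1, the basis $e_i\otimes1$ is orthogonal, and in particular the tensor seminorm on $U$ is a norm. So $U$ is cartesian.

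\emph{Step 3 (graded reduction).} There is a natural graded $\tilk_\gr$-bilinear map $\tilV_\gr\times\tilK_\gr\to\tilU_\gr$, sending the classes of $v$ and $b$ to the class of $v\otimes b$ in degree $\|v\|\cdot|b|$. It is well defined because $\|v\otimes b\|\le\|v\|\,|b|$. So it induces a homomorphism $\phi\colon\tilV_\gr\otimes_{\tilk_\gr}\tilK_\gr\to\tilU_\gr$.

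\emph{Step 4 ($\phi$ is an isomorphism).} Because $V$ is cartesian, $\dim_{\tilk_\gr}\tilV_\gr=n$, and the classes $\tile_i$ of the $e_i$ form a homogeneous basis. Hence the source of $\phi$ is free over $\tilK_\gr$ with basis $\tile_i\otimes1$.

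Surjectivity: let $u=\sum_i a_ie_i$ have norm $r$, so $r=\max_i|a_i|\,\|e_i\|$ by Step 2. Its class in degree $r$ is $\sum_{i\in I_0}\tile_i\cdot\tila_i$, where $I_0$ is the set of indices with $|a_i|\,\|e_i\|=r$. The remaining terms have norm $<r$ and so vanish in degree $r$.

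Injectivity: take a homogeneous element $\sum_i\tile_i\otimes\tilb_i$ of degree $r$ whose image is zero. Lift each nonzero $\tilb_i$ to some $b_i\in K$. Orthogonality gives $\|\sum_i b_ie_i\|=r$, so the image is nonzero, a contradiction unless all $\tilb_i=0$.

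Alternatively, once surjectivity is known one can count dimensions. The target $\tilU_\gr$ has $\tilK_\gr$-dimension at most $n$ by the fundamental inequality, with equality because $U$ is cartesian.

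The only real obstacle is Step 2, namely that the infimum defining the tensor seminorm cannot drop below the naive coordinate norm. The ultrametric coefficient comparison above handles this cleanly, so I expect no further difficulties.
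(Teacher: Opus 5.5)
Your proposal is correct and is essentially the paper's argument. The paper also fixes an orthogonal basis of $V$, notes by ``direct inspection'' that it stays orthogonal for the tensor norm on $V_K$, and concludes that its graded reductions form bases of $\tilV_\gr$ over $\tilk_\gr$ and of $\tilU_\gr$ over $\tilK_\gr$; your Steps 2 and 4 just write out that inspection and the resulting identification in full.
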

\begin{proof}
Choose an orthogonal basis $v_1\..v_n$ of $V$. Then a direct inspection shows that it is a cartesian basis of $V_K$, and hence its images under the graded reduction map also form bases of $\tilV_\gr$ over $\tilk_\gr$ and $\tilU_\gr$ over $\tilK_\gr$.
\end{proof}

\subsection{Applications to cartesian base changes}
As a consequence one can easily control the norms under finite defectless base changes. This case already covers discrete valuations and finite tame extensions.

\begin{lem}\label{grlem}
Let $l/k$ be a finite defectless extension of real valued fields. Then for any real valued $k$-field $K$ and $L=K\otimes l$ the following claims hold:

(i) $L$ is spectral if and only if $\tilL=\tilK_\gr\otimes_{\tilk_\gr}\till_\gr$ is reduced. Furthermore, in this case $\tilL=\prod_{i=1}^r\tilL_i$, where each $\tilL_i$ is the graded reduction of $L_i=(L,|\ |_i)$ and $\{|\ |_1\..|\ |_r\}=\cM(L)$ is the set of all $K$-valuations on $L$ (in particular, $\hatL=\hatK\otimes_kl=\prod_{i=1}^r\hatL_i$).

(ii) $L$ is multiplicative if and only if $\tilK_\gr\otimes_{\tilk_\gr}\till_\gr$ is a graded field if and only if (i) holds with $r=1$.
\end{lem}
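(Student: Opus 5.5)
Since $l/k$ is defectless it is cartesian, so by Lemma~\ref{grlem0} the $K$-space $L=K\otimes_kl$ with the tensor norm is cartesian. Moreover $\tilL_\gr=\tilK_\gr\otimes_{\tilk_\gr}\till_\gr$, and this identification is multiplicative, since the graded reduction of a product of homogeneous lifts of an orthogonal basis is computed in $\tilL_\gr$. By observation (i) of the introduction, the tensor norm on $L$ is power-multiplicative (resp. multiplicative) if and only if $\tilL_\gr$ is reduced (resp. integral). This gives the first equivalences in (i) and (ii) directly. For a graded ring finite over the graded field $\tilK_\gr$, being integral is the same as being a graded field, because multiplication by a nonzero homogeneous element is an injective $\tilK_\gr$-linear map of a finite-dimensional graded space, hence bijective.

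For the structure statement in (i), assume $\tilL_\gr$ is reduced. It is a finite graded $\tilK_\gr$-algebra, so it is graded-artinian, and reducedness makes it a finite product $\prod_{i=1}^s F_i$ of graded fields. The plan is to match the factors $F_i$ with the points of $\cM(L)$. Since $L$ is finite over $K$, every point of $\cM(L)$ is a valuation $|\ |_i$ extending that of $K$, so $\cM(L)$ is the finite set of $K$-valuations on $L$.

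Now $L$ is spectral, so $\|\ \|=\|\ \|_\sp=\max_i|\ |_i$. I will then show that the homogeneous idempotents $\tilde e_i$ lifting the decomposition of $\tilL_\gr$ lift to actual idempotents of $\hatL$ by Hensel/Newton iteration in the complete finite $\hatK$-algebra $\hatL=\hatK\otimes_kl$. This yields $\hatL=\prod_j\hatL_j$ with each factor having graded reduction $F_j$, a graded field. By part (ii) applied to each factor, $\hatL_j$ is a multiplicative normed field, so each factor carries exactly one valuation. Conversely, each valuation $|\ |_i$ factors through exactly one factor. Hence $s=r$, $F_i=\tilL_i$, and $\hatL=\prod\hatL_i$. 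Once this is in place, the last equivalence in (ii) with ``$r=1$'' is immediate.

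The main obstacle is this idempotent-lifting step, in particular checking that the product decomposition of $\hatL$ is orthogonal, i.e.\ $\|\sum x_j\|=\max\|x_j\|$. This orthogonality is what identifies the graded reduction of each factor with $F_j$ and the norm on each factor with a single valuation. I would prove it by noting that the lifted idempotents $e_j$ have norm $1$ and reduce to the orthogonal idempotents $\tilde e_j$. Then $\|x\|=\max_j\|e_jx\|$ follows because the reductions $\widetilde{e_jx}$ lie in distinct factors of $\tilL_\gr$ and so cannot cancel.
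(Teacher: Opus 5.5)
Your proposal is correct. The first half (Lemma~\ref{grlem0} together with the graded criterion for power-multiplicativity and multiplicativity) is exactly the paper's argument. For the structure statement in (i), however, you take a different route.

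The paper argues from the analytic side. Spectrality gives $\|\ \|=\max_i|\ |_i$. Since a power-multiplicative norm kills all nilpotents, $\hatL=\hatK\otimes_kl$ is a reduced finite $\hatK$-algebra. Hence it splits into finite extensions of the complete field $\hatK$, each carrying a unique valuation. The max formula makes this splitting isometric, so $\hatL=\prod_i\hatL_i$, and the graded reduction of this orthogonal product is $\prod_i\tilL_i$.

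You start from the graded side instead: you decompose $\tilL_\gr$ into graded fields and lift the degree-one idempotents to $\hatL$ by Newton iteration. The map $e\mapsto 3e^2-2e^3$ converges since $\|e^2-e\|<1$. This costs the structure theory of finite reduced graded algebras over a graded field plus a Hensel-type lifting, which the paper does not need. In exchange, it matches the factors of $\tilL_\gr$ with those of $\hatL$ directly and produces $\|\ \|=\max_i|\ |_i$ as an output. Your sentence invoking this formula is in fact never used afterwards.

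The step you single out as the main obstacle is immediate. Since $\|e_j\|\le 1$ and $\sum_je_j=1$, the ultrametric inequality gives $\|x\|\le\max_j\|e_jx\|\le\|x\|$, with no need to look at reductions.

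Two small bookkeeping points:
\begin{itemize}
\item What you apply to each factor $\hatL_j$ is the graded criterion for multiplicativity, not part (ii) itself.
\item To identify $\cM(L)$ with all $K$-valuations on $L$, you need that $l/k$ is unibranch, which holds because it is defectless. This ensures every $K$-valuation restricts to the given valuation of $l$ and is therefore bounded by the tensor norm.
\end{itemize}
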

\begin{proof}
By Lemma \ref{grlem0} $\tilL=\tilK_\gr\otimes_{\tilk_\gr}\till_\gr$ and obviously the tensor norm $\|\ \|$ is spectral (resp. multiplicative) if and only if $\tilL$ is reduced (resp. integral, and hence a graded field). Furthermore, if $\tilL$ is reduced, then $\|\ \|=\max_i|\ |_i$ and hence $\hatL=\prod_i\hatL_i$ and the graded reduction is $\tilL=\prod_{i=1}^r\tilL_i$.
\end{proof}

\section{Almost orthogonality}
In presence of defect one has to consider bases which are close enough to orthogonal ones. Of course this is more technically involved, but allows to simplify one aspect -- if the valuation is discrete one has to use the graded reduction, while otherwise it suffices to work with the unit balls only.

\subsection{Weak cartesianity}
The definition makes sense for seminormed rings and modules, but let us restrict the generality to a seminormed vector space $V$ over a real valued field $k$. A family of elements $\{e_i\}_{i\in I}$ is called $r$-orthogonal for $r\in(0,1]$ if for any linear combination $v=\sum_{i\in I}a_ie_i$ (with almost all $a_i$ zeros) one has that $|v|\ge r\max_i\|a_ie_i\|$. We say that $V$ is {\em $r$-cartesian} if it possesses an $r$-orthogonal basis, and {\em weakly cartesian} if any finite dimensional subspace possesses such a basis. In particular, $V$ is normed. As earlier, if $r=1$ we simply say {\em orthogonal} and {\em cartesian}. The notion of $r$-cartesianity is especially important in the finite-dimensional case when the valuation is not discrete. In particular, one has the following classical result.

\begin{lem}\label{cartlem}
Let $k$ be a real valued field and let $V$ be a finite-dimensional normed $k$-vector space. The following conditions are equivalent:
\begin{itemize}
\item[(i)] $V$ is $r$-cartesian for some $r>0$.
\item[(ii)] $V$ is $r$-cartesian for any $r\in(0,1)$.
\item[(iii)] The norm of $V$ is equivalent to a cartesian norm.
\item[(iv)] The completion map $V\to\hatV$ is injective.
\item[(v)] $\dim_k(V)=\dim_{\hatk}(\hatV)$.
\end{itemize}
If $k$ is discretely valued this is also equivalent to $V$ being cartesian.
\end{lem}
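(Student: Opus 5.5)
I will prove the cycle (ii)$\Rightarrow$(i)$\Rightarrow$(iii)$\Rightarrow$(iv)$\Rightarrow$(v)$\Rightarrow$(ii), then treat the discrete case. Throughout set $n=\dim_k(V)$.

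\emph{The easy implications.} (ii)$\Rightarrow$(i) is trivial.

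For (i)$\Rightarrow$(iii), take an $r$-orthogonal basis $e_1\..e_n$ and let $\|\sum a_ie_i\|'=\max_i|a_i|\cdot\|e_i\|$. This is a cartesian norm, and it satisfies $r\|\ \|'\le\|\ \|\le\|\ \|'$ by the ultrametric inequality. Hence it is equivalent to the given norm.

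For (iii)$\Rightarrow$(iv), equivalent norms have the same completion. For a cartesian norm with orthogonal basis $e_i$ the completion is $\oplus_i\hatk e_i$, again with an orthogonal basis, so $V\to\hatV$ is injective. The same computation gives $\dim_{\hatk}\hatV=n$.

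\emph{Showing (iv)$\Rightarrow$(v).} The image of $V$ is dense in $\hatV$. Hence the $\hatk$-span $W$ of $V$ in $\hatV$ has dimension at most $n$. A finite-dimensional subspace over the complete field $\hatk$ is closed, which is the classical fact that finite-dimensional normed spaces over complete fields are closed. Therefore $W=\hatV$ and $\dim_{\hatk}\hatV\le n$.

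For the reverse inequality I will show that the natural map $V\otimes_k\hatk\to\hatV$ is injective under (iv). I expect this to be the main subtle step, because injectivity of $V\to\hatV$ does not obviously give $k$-linearly independent vectors that stay $\hatk$-independent. I would argue by induction on $n$. Pick a line $ke\subset V$. The quotient $V/ke$ with the quotient norm is normed, since $ke$ is closed in $V$: indeed $ke\to\hatk e$ is isometric onto its image. I would then check that $(V/ke)^\wedge=\hatV/\hatk e$ is still injective on $V/ke$. This uses that the closure of $ke$ in $\hatV$ meets $V$ only in $ke$. I would try to deduce that from (iv) applied to the subspace spanned by $e$ and a candidate vector, reducing everything to dimension two. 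There a direct estimate shows that $\inf_c|v-ce|=0$ forces $v\in ke$ or the failure of (iv).

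\emph{Showing (v)$\Rightarrow$(ii).} Over the complete field $\hatk$, the classical theorem gives that $\hatV$ is $r$-cartesian for every $r<1$. This is proved by induction on the dimension, using that the distance to a closed hyperplane is attained up to a factor $r$. Choose such an $r'$-orthogonal basis $\hat e_i$ with $r<r'<1$. By (v), $V$ is dense in $\hatV$ and contains a basis, so we may approximate each $\hat e_i$ by some $e_i\in V$ with $|e_i-\hat e_i|<\epsilon|\hat e_i|$. For small $\epsilon$ a standard perturbation estimate shows that the $e_i$ are $r$-orthogonal and form a basis of $\hatV$. Since $n=\dim\hatV$, they also form a basis of $V$.

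\emph{The discrete case.} Assume $|k^\times|$ is discrete and take $r$ close enough to $1$. Then for $v=\sum a_ie_i$ the values $|a_ie_i|$ lie in finitely many cosets of the discrete group $|k^\times|$. I would renormalize the basis by grouping the $e_i$ according to these cosets. Within one coset, reduce to $\tilk$-linear algebra on the reductions of the $e_i$: an $r$-orthogonal basis with $r>\max(|\pi|, \text{gaps})$ has linearly independent graded reductions, and is therefore orthogonal. Hence $V$ is cartesian.
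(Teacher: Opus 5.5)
The paper quotes this lemma as classical and gives no proof, so your argument has to stand on its own. Most of it does. The implications (i)$\Rightarrow$(iii) and (v)$\Rightarrow$(ii) are correct. So is your observation that the $\widehat{k}$-span of $V$ in $\widehat{V}$ is closed and dense, so that $V\otimes_k\widehat{k}\to\widehat{V}$ is always surjective.

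The gap is the step you flagged as subtle: deducing injectivity of $V\otimes_k\widehat{k}\to\widehat{V}$ from injectivity of $V\to\widehat{V}$. This step is false, not just hard. For a normed (as opposed to seminormed) $V$, the map $V\to\widehat{V}$ is an isometric embedding, so (iv) read literally holds for every $V$. For instance, let $k=\mathbb{Q}$ with $|\ |_p$, pick $\alpha\in\mathbb{Q}_p\setminus\mathbb{Q}$, and let $V=\mathbb{Q}^2$ with $\|(a,b)\|=|a+b\alpha|_p$. This is a norm, and $(a,b)\mapsto a+b\alpha$ identifies $V$ isometrically with a dense subspace of $\mathbb{Q}_p$. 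Hence $\widehat{V}=\mathbb{Q}_p$ is one-dimensional, and (v) fails, and with it (i).

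Every auxiliary claim of your induction breaks in this example. Take $e=(1,0)$:
\begin{itemize}
\item the line $ke$ is dense in $V$, not closed; isometry of $ke\to\widehat{k}e$ gives no closedness when $k$ is incomplete;
\item the quotient seminorm on $V/ke$ vanishes identically;
\item $v=(0,1)$ satisfies $\inf_{c\in k}\|v-ce\|=\inf_c|\alpha-c|_p=0$, although $v\notin ke$.
\end{itemize}

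So (iv) has to be read as injectivity of $V\otimes_k\widehat{k}\to\widehat{V}$, which is the only reading compatible with (v). With that reading, your own computation $\widehat{V}=\bigoplus_i\widehat{k}e_i$ for a cartesian norm gives (iii)$\Rightarrow$(iv). Then (iv)$\Leftrightarrow$(v) is immediate from the surjectivity above, and no induction is needed.

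Your discrete case also needs one more ingredient than you state. Write $M=\max_i|a_i|\cdot\|e_i\|$. A relation among the reductions of an $r$-orthogonal basis only produces a vector $v$ with $rM\le\|v\|<M$. This is a contradiction only if no norm value of $V$ lies in $[rM,M)$, so you must know in advance that $\|V\setminus\{0\}\|$ is discrete. This follows from the fundamental inequality $\dim_{\widetilde{k}_{\mathrm{gr}}}\widetilde{V}_{\mathrm{gr}}\le n$, which shows that $\|V\setminus\{0\}\|$ is a union of at most $n$ cosets of $|k^\times|$. Then any $r$ larger than the maximal ratio of consecutive norm values makes every $r$-orthogonal basis orthogonal. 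Alternatively, use that $\widehat{k}$ is spherically complete, so $\widehat{V}$ is cartesian. Then approximate an orthogonal basis $\hat e_i$ of $\widehat{V}$ by vectors $e_i\in V$ with $\|e_i-\hat e_i\|<\|\hat e_i\|$; these are orthogonal.
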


\subsection{Unit balls}
Given a normed $k$-vector space $V$ we will use the notation $\Vcirc=V_{\le 1}$ to denote its unit ball. As usual, we say that a $\kcirc$-module $M$ {\em almost} vanishes if $|k^\times|$ is dense and $\pi M=0$ for any $\pi\in\kcirccirc$.

\begin{lem}\label{reeslem}
Let $k$ be a real valued field with dense group of values $|k^\times|$, let $U,V$ be normed vector $k$-spaces and $r\in(0,1]$.

(i) An embedding $U\into V$ is an isometry if and only if the $\kcirc$-module $\Vcirc/\Ucirc$ is torsion free if and only if $\Vcirc/\Ucirc$ is almost torsion free.

(ii) Let $v_1\..v_n\in V$ be elements with $r_i=\|v_i\|$. Consider the normed vector space $W=\oplus_{i=1}^n ke_i=\oplus_{i=1}^n k(r_i)$ with orthogonal basis $e_1\..e_n$ such that $\|e_i\|=r_i$. Then the elements $v_1\..v_n$ are $r$-orthogonal if and only if the map $\phi\:W\to V$ taking $e_i$ to $v_i$ is injective and the torsion part of the cokernel of the induced map $\phicirc\:\Wcirc\to\Vcirc$ is annihilated by any $\pi$ with $|\pi|<r$. If this holds, then $\Coker(\phicirc)_\tor$ is killed also by any $\pi$ with $|\pi|=r$.

(iii) Let $W=U\otimes V$ be provided with the tensor norm. Then $\Ucirc\otimes_{\kcirc}\Vcirc\subseteq\Wcirc$ and the cokernel almost vanishes.
\end{lem}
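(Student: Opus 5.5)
For (i), I would first record that $\Ucirc=\Vcirc\cap U$ always holds when $U\into V$ is an isometry. In that case $\Vcirc/\Ucirc$ is torsion free. Indeed, if $v\in\Vcirc$ and $0\neq\pi\in\kcirc$ satisfy $\pi v\in\Ucirc$, then $v\in U$ and $\|v\|\le 1$, so $v\in\Ucirc$. Torsion free trivially implies almost torsion free, which I read as: the torsion is almost zero.

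For the converse, assume the torsion of $\Vcirc/\Ucirc$ is killed by every $\pi\in\kcirccirc$. Take $u\in U$ and suppose $\|u\|_V<\|u\|_U$. Since $|k^\times|$ is dense, I can rescale so that $\|u\|_V<1<\|u\|_U$. I then choose $\pi\in\kcirccirc$ with $\|u\|_V<|\pi|^2$ and $|\pi|\,\|u\|_U>1$; this is possible after further rescaling, again by density. Put $v=u/\pi$. Then $v\in\Vcirc$ and $\pi v=u$, so $v$ is torsion modulo $\Ucirc$ whenever $u\in\Ucirc$. To land in $\Ucirc$, I scale $u$ so that $\|u\|_U\le1$ while keeping $\|u\|_V$ much smaller. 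Then $v=u/\pi'$ lies in $\Vcirc$ for a suitable $\pi'$ with $|\pi'|\ge\|u\|_V$. Almost torsion freeness gives $\pi''v\in\Ucirc$ for all small $\pi''$, that is, $\|u\|_U\le|\pi'|/|\pi''|$. Letting $|\pi''|\to1$ and $|\pi'|\to\|u\|_V$ yields $\|u\|_U\le\|u\|_V$, a contradiction. The only care needed is this bookkeeping with density of values.

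For (ii), $r$-orthogonality means $\|\phi(w)\|\ge r\|w\|_W$. Taking $r>0$, this is equivalent to $\phi$ being injective together with the following condition: for every $w\in W$ with $\|\phi(w)\|\le1$ one has $\|w\|\le r^{-1}$. The torsion of $\Coker(\phicirc)$ consists of the classes of $v\in\Vcirc\cap\phi(W)$, say $v=\phi(w)$. Such a class is killed by $\pi$ exactly when $\|\pi w\|_W\le1$, i.e. $\|w\|\le|\pi|^{-1}$. So "killed by all $\pi$ with $|\pi|<r$" says $\|w\|\le\inf_{|\pi|<r}|\pi|^{-1}=r^{-1}$, using density. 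This is exactly the condition above. The extra claim follows because $\|w\|\le r^{-1}=|\pi|^{-1}$ for $|\pi|=r$.

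For (iii), the inclusion holds because the tensor norm of $\sum u_i\otimes v_i$ is at most $\max\|u_i\|\|v_i\|\le1$. For almost surjectivity, take $w\in\Wcirc$ and $\pi\in\kcirccirc$. By the definition of the tensor norm as an infimum, there is a representation $w=\sum u_i\otimes v_i$ with $\max\|u_i\|\|v_i\|<|\pi|^{-1}$. The main obstacle is to rebalance each term. By density of $|k^\times|$, I choose $c_i\in k^\times$ with $\|c_iu_i\|\le1$ and $\|c_i^{-1}v_i\|\le|\pi|^{-1}$; this is possible since the product is less than $|\pi|^{-1}$. Absorbing $c_i$, I get $\pi w=\sum (c_iu_i)\otimes(\pi c_i^{-1}v_i)\in\Ucirc\otimes\Vcirc$.
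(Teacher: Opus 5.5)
Your proposal is correct and matches the paper's proof. The paper writes out only (iii), using exactly your rebalancing $c_iu_i\otimes c_i^{-1}v_i$ via density of $|k^\times|$ (it normalizes a presentation of $\pi w$ rather than of $w$, which makes no difference), and it treats (i) and (ii) as the routine unravelling of definitions that you carry out. Two cosmetic points: delete the abandoned first attempt in the converse of (i), and note that the inclusion in (iii) tacitly uses that $\Ucirc\otimes_{\kcirc}\Vcirc\to U\otimes_kV$ is injective, which holds because torsion-free $\kcirc$-modules are flat.
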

\begin{proof}
All three claims reduce to straightforward unravelling the definitions. For example, let us check (iii). We should prove that $\pi w\in\Ucirc\otimes_{\kcirc}\Vcirc$ for any $w\in\Wcirc$ and $\pi\in\kcirccirc$. By definition of the tensor seminorm there exists a presentation $\pi w=\sum_i u_i\otimes v_i$ with $\|u_i\|\cdot\|v_i\|<1$ for any $i$. Since $|k^\times|$ is dense, we can find $c_i\in k$ such that $\|c_iu_i\|<1$ and $\|c_i^{-1}v_i\|<1$ and hence $\pi w\in\Ucirc\otimes_{\kcirc}\Vcirc$.
\end{proof}

Unlike claims (i) and (ii) of the lemma, one can not completely remove the adjective ``almost'' in claim (iii), and this is so even when the vector spaces are strict (i.e. $\|U\|=|k|$) or, moreover, $|k^\times|=\bbR_{\ge 0}$. Here are some examples.

\begin{exam}
Assume for simplicity that $k$ is algebraically closed and complete, but not spherically complete, and choose a decreasing sequence of balls $B_i=B(a_i,r_i)$ without common $k$-points. In particular, $r=\lim_i r_i>0$ and usually one can achieve that $r$ is arbitrary. For example, one can take $k=\CC_p$ and $a_i=\sum_{j=1}^ip^{q_j}$, where $q_j\in\QQ$ strictly increase and tend to $q$, and then $r=|p|^q$.

The intersection $\cap_i B_i$ in the Berkovich affine line is a single point of type 4. The completed residue field $K=\cH(x)$ is an immediate extension of $k$ with the valuation determined by the inequalities $|t-a_i|\le r_i$, hence $K$ is strict over $k$. In fact $K=\widehat{k[t]}$ and the restricted valuation $|\ |_x$ on $k[t]$ is the infimum of the translated Gauss valuations $|\ |_{a_i,r_i}$. Set $t'=t\otimes 1-1\otimes t$, then $t'=(t-a_i)\otimes 1-1\otimes(t-a_i)$ and hence $|t'|\le r_i$, yielding that $|t'|\le r$. In fact, it is easy to see that the exact equality holds. For instance, note that the intersection $B=\cap_i B_i$ has a $K$-point, hence the base change $B\wtimes_kK$ is just a $K$-disc of radius $r$, that is, $K\wtimes_kK=K\{t\}_r$. If $r\notin|k|$ we obtain that $K\otimes_kK$ is not strict, and if $r=1$ we have that $t'$ lies in $(K\otimes_kK)^\circ$, but not in $\Kcirc\otimes_{\kcirc}\Kcirc$.
\end{exam}

\subsection{Descent}
There is a standard trick in Berkovich geometry which reduces many questions to the strict case -- apply a base change with respect to an appropriate extension $K/k$ with a large $|K^\times|$ (typically a Gauss extension) and descend the results. We will only need to use it when $k$ is discretely valued and Lemma~\ref{reeslem}(iii) cannot be applied as it is.

\begin{lem}\label{descent}
Assume that $k$ is discretely valued and $V$ is a weakly cartesian seminormed vector $k$-vector space. Let $k_r=k(t)$ be the Gauss extension with $|t|=r$. Then $V\into V_r=V\otimes k_r$ is an isometry and vectors $v_1\..v_n$ are $s$-orthogonal over $k$ if and only if their images in $V_r$ are $s$-orthogonal over $k_r$.
\end{lem}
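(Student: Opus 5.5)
The plan is to compute the tensor seminorm on $V_r$ explicitly in terms of a basis of $k_r$ over $k$. The key observation is that $k_r=k(t)$ is "cartesian over $k$ with respect to the monomials $t^j$". On $k[t]$ the Gauss norm is $|\sum_j a_jt^j|=\max_j|a_j|r^j$. Since every element of $k(t)$ is a fraction $f/g$ with $|f/g|=|f|/|g|$, it suffices to work with $V\otimes_k k[t]=\oplus_j V t^j$.

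\textbf{Step 1: the tensor seminorm on $V[t]$.} First I will show that the tensor seminorm on $V\otimes_kk[t]$ equals the Gauss-type seminorm $\|\sum_j v_jt^j\|=\max_j\|v_j\|r^j$.

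The inequality $\le$ is immediate from the presentation $\sum_j v_j\otimes t^j$. For $\ge$, take any presentation $\sum_i w_i\otimes f_i$ and expand each $f_i=\sum_j c_{ij}t^j$. Then $v_j=\sum_i c_{ij}w_i$, so
\[
\|v_j\|r^j\le\max_i\|w_i\|\,|c_{ij}|r^j\le\max_i\|w_i\|\,|f_i|.
\]
This works for an arbitrary seminormed $V$ and needs no cartesianity.

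\textbf{Step 2: extension to $V_r$.} An element of $V_r$ has the form $u/g$ with $u\in V[t]$ and $g\in k[t]$. Using $|g|\,|g^{-1}|=1$ in $k_r$, as in the proof of Corollary~\ref{precor}, I expect $\|u/g\|=\|u\|/|g|$. The direction $\le$ is clear.

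The direction $\ge$ is the delicate point, and this is where weak cartesianity enters. Any element lies in $W_r$ for a finite-dimensional subspace $W\subseteq V$. By Lemma~\ref{cartlem}, $W$ has an $s'$-orthogonal basis for every $s'<1$. In such a basis, the elements of $W\otimes k_r$ become tuples in $k_r^n$, and the norm is, up to the factor $s'$, the max of the coordinate norms. Multiplicativity of $k_r$ then gives the inequality up to the factor $s'$; letting $s'\to1$ yields equality. Comparing $W_r$ with $V_r$ requires that $W\into V$ stays an isometry after base change. I would handle this the same way, choosing a bigger finite-dimensional subspace that contains all vectors used in a near-optimal presentation.

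\textbf{Step 3: consequences.} Isometry of $V\into V_r$ is Step 1 with degree $0$.

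For orthogonality, the direction ``$s$-orthogonal over $k_r$ implies over $k$'' follows from isometry. For the converse, take $\sum_i a_iv_i$ with $a_i\in k_r$ and clear denominators by Step 2, so that $a_i\in k[t]$. Write $a_i=\sum_j a_{ij}t^j$. Then
\[
\Big\|\sum_i a_iv_i\Big\|=\max_j r^j\Big\|\sum_i a_{ij}v_i\Big\|\ge s\max_{i,j}r^j|a_{ij}|\,\|v_i\|=s\max_i|a_i|\,\|v_i\|.
\]

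The main obstacle is the $\ge$ direction in Step 2. Dividing by $g$ may not be controlled by the polynomial computation alone, and the approximation argument through $s'$-orthogonal bases of finite-dimensional subspaces is where I expect the real work to lie.
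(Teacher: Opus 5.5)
Your proposal is correct and follows the paper's route. The paper likewise checks by direct inspection that $V\hookrightarrow V\otimes_k k[t]$ is an isometry and that $s$-orthogonality persists over $k[t]$ (your Steps 1 and 3), and it leaves the passage to $k(t)$ as easy, which your Step 2 validly supplies. Two small points on Step 2: the isometry $V\hookrightarrow V_r$ needs Step 2 with $g=1$, not Step 1 alone; and since $k$ is discretely valued, Lemma~\ref{cartlem} already gives orthogonal bases, so no limit $s'\to 1$ is needed.

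In fact the step you single out as the main obstacle needs no cartesianity at all. Choosing the smallest indices that realize the maxima, as in the classical Gauss lemma, shows $\|hu\|=|h|\cdot\|u\|$ for any $h\in k[t]$, any $u\in V[t]$ and any seminormed $V$. Multiplying an arbitrary presentation of $u/g$ by a common denominator then gives $\|u/g\|=\|u\|/|g|$ directly.
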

\begin{proof}
One checks by a direct inspection that $V\into V\otimes k[t]$ is an isometry and $v_1\..v_n$ are $s$-orthogonal over $k[t]$. The claim follows easily.
\end{proof}

\begin{rem}
In principle, instead of using the descent trick one could use a more technical approach which treats the discrete and non-discrete cases on an equal footing by considering the balls of all radii at once. The {\em Rees algebra} $k^\circ_\gr=\oplus_{r>0}\kcirc_{\le r}$ is a graded valuation ring of the graded field $\oplus_{r>0} k$ and its graded residue field is $\tilk_\gr$. A similar construction applies to seminormed $k$-vector spaces. If the valuation is not discrete, $V^\circ_\gr$ contains almost the same information as $\Vcirc$, while in the discrete case it is essentially controlled by $\tilV_\gr$. We will occasionally mention in the sequel graded rings of the form $\cA^\circ_\gr/\pi\cA^\circ_\gr=\oplus_{r>0}\cAcirc_r/\pi\cAcirc_r$, where $s=|\pi|<1$ and call such a ring the {\em $s$-thick graded reduction} of $\cA$.
\end{rem}

\subsection{Base change of isometries}
As a toy application let us reprove the following simple well known statement, e.g. see \cite[Lemme~3.1]{Poineau}.

\begin{lem}\label{isomlem}
Let $U,U',V$ be seminormed vectors spaces over a real valued field $k$, and let $U\into U'$ be an isometry. Then $U\otimes V\into U'\otimes V$ is an isometry.
\end{lem}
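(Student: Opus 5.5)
The plan is to reduce to the case where $V$ is finite-dimensional, and then to the case where $V$ is cartesian. After that the tensor norm can be computed coordinatewise.

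Since the map $U\otimes V\to U'\otimes V$ is non-expansive by functoriality of the tensor seminorm, it suffices to prove the reverse inequality $\|x\|_{U\otimes V}\le\|x\|_{U'\otimes V}$ for $x\in U\otimes V$. This is injectivity-free: the claim is just an equality of seminorms on $U\otimes V$, since $U\otimes V\to U'\otimes V$ is injective as a map of vector spaces.

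\textbf{First reduction: finite dimension.} Fix $x=\sum_{j=1}^n u_j\otimes v_j$ and $\epsilon>0$, and take a presentation $x=\sum_i u'_i\otimes w_i$ in $U'\otimes V$ with $\max_i\|u'_i\|\|w_i\|\le\|x\|'+\epsilon$. Replace $V$ by the finite-dimensional subspace $V_0$ spanned by the $v_j$ and $w_i$. The tensor seminorm of $x$ in $U\otimes V$ is at most that in $U\otimes V_0$, and both presentations live over $V_0$. So we may assume $V$ is finite-dimensional, with its induced seminorm.

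\textbf{Second reduction: near-orthogonal basis.} Divide $V$ by the kernel of its seminorm. Elements of the kernel contribute zero to tensor seminorms, since $\|u\otimes v\|\le\|u\|\|v\|=0$, and one checks that the quotient map induces the same tensor seminorms. So $V$ may be assumed normed and finite-dimensional.

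Now I would like an $r$-orthogonal basis $e_1,\dots,e_m$ of $V$ with $r$ close to $1$. By Lemma~\ref{cartlem} this exists when $V\to\hatV$ is injective. In general, I would handle this by noting that the tensor seminorm only depends on the completions: replace $k$ by $\hatk$ and $V$ by $\hatV$, which is finite-dimensional over $\hatk$ and hence $r$-cartesian for all $r<1$ (equivalent to a cartesian norm). I expect this to be the main obstacle: verifying carefully that passage to $\hatk$ (or to the image in $\hatV$) does not change the tensor seminorms on $U\otimes V$ and $U'\otimes V$. I would try first to avoid it. Instead, approximate: pick $r$-orthogonal elements spanning a space $V_1$ with $V\to V_1$ non-expansive and isometric up to factor. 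Alternatively use the quotient $V\to \mathrm{Im}(V\to\hatV)$, whose kernel has seminorm zero.

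\textbf{Key computation.} With an $r$-orthogonal basis $e_1,\dots,e_m$ of $V$, write $x=\sum_j u_j\otimes e_j$ with $u_j\in U$. I claim that
$$r\max_j\|u_j\|\|e_j\|\le\|x\|_{U'\otimes V}\le\|x\|_{U\otimes V}\le\max_j\|u_j\|\|e_j\|.$$
The last inequality is trivial. For the first, take any presentation $x=\sum_i u'_i\otimes w_i$ in $U'\otimes V$ and expand $w_i=\sum_j a_{ij}e_j$ with $\|a_{ij}e_j\|\le r^{-1}\|w_i\|$. Then $u_j=\sum_i a_{ij}u'_i$ in $U'$, so
$$\|u_j\|_{U'}\|e_j\|\le\max_i\|u'_i\|\|a_{ij}e_j\|\le r^{-1}\max_i\|u'_i\|\|w_i\|.$$
Since $U\into U'$ is an isometry, $\|u_j\|_U=\|u_j\|_{U'}$, which gives the first inequality.

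Thus $\|x\|_{U\otimes V}\le r^{-1}\|x\|_{U'\otimes V}$ for every $r<1$. Letting $r\to1$ (together with $\epsilon\to 0$) yields the equality of seminorms.
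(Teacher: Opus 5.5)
There is a genuine gap, and it is the step you flagged yourself. Your first reduction, the passage to the normed quotient, and the key computation are all correct. Together they prove the lemma whenever the finite-dimensional normed space $V$ has $r$-orthogonal bases for $r$ arbitrarily close to $1$, e.g.\ when $k$ is complete.

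By Lemma~\ref{cartlem}, such bases exist iff $\dim_k V=\dim_{\hat k}\widehat{V}$. This fails in general when $k$ is not complete, and the paper deliberately allows non-complete $k$. For example, take $\alpha\in\hat k\setminus k$ and the norm $\|(a,b)\|=|a+\alpha b|$ on $V=k^2$. Then $\widehat{V}=\hat k$, and no basis of $V$ is $r$-orthogonal for any $r>0$.

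None of your suggested remedies repairs this:
\begin{itemize}
\item Since $V$ is already normed, $V\to\widehat{V}$ is injective, so passing to its image changes nothing. The obstruction is the kernel of $V\otimes_k\hat k\to\widehat{V}$, not of $V\to\widehat{V}$.
\item No finite-dimensional $V_1$ with a near-orthogonal basis can contain $V$ with an equivalent norm, since that would force $\dim_kV=\dim_{\hat k}\widehat{V}$.
\item Replacing $(k,U,V)$ by $(\hat k,\widehat{U},\widehat{V})$ presupposes that $U\otimes_kV\to\widehat{U}\otimes_{\hat k}\widehat{V}$ is an isometry. That is a statement of the same type as the lemma, and you give no argument for it. The naive density argument is circular, because the error term $x-\sum_i u_i\otimes w_i\in U\otimes_k V$ is only known to be small in the target.
\end{itemize}

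The missing idea is to split off the defect rather than try to remove it.
\begin{itemize}
\item Choose $e_1,\dots,e_d\in V$ whose images form an $r$-orthogonal basis of the finite-dimensional $\hat k$-space $\widehat{V}$. This is possible because $\hat k$ is complete and $V$ is dense, and small perturbations preserve $r$-orthogonality.
\item Extend by $f_1,\dots,f_m$ to a $k$-basis of $V$, and write $f_l=\sum_j\lambda_{lj}e_j$ in $\widehat{V}$ with $\lambda_{lj}\in\hat k$.
\item For $x=\sum_ju_j\otimes e_j+\sum_ly_l\otimes f_l$ and any $c_{lj}\in k$ one has $x=\sum_j(u_j+\sum_lc_{lj}y_l)\otimes e_j+\sum_ly_l\otimes(f_l-\sum_jc_{lj}e_j)$. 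The second sum has tensor seminorm at most $\max_{l,j}\|y_l\|\,|\lambda_{lj}-c_{lj}|\,\|e_j\|$.
\item Letting $c_{lj}\to\lambda_{lj}$ gives $\|x\|_{U\otimes V}\le\max_j\|\hat u_j\|\,\|e_j\|$, where $\hat u_j=u_j+\sum_l\lambda_{lj}y_l\in\widehat{U}$.
\item Conversely, the map $U'\otimes_kV\to\widehat{U'}\otimes_{\hat k}\widehat{V}$ is non-expansive and sends $x$ to $\sum_j\hat u_j\otimes e_j$. Your key computation over $\hat k$, together with the isometry $\widehat{U}\to\widehat{U'}$, then gives $r\max_j\|\hat u_j\|\,\|e_j\|\le\|x\|_{U'\otimes V}$.
\end{itemize}

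With this step your argument becomes a complete proof working with explicit near-orthogonal bases. It is quite different from the paper's, which never chooses bases. The paper works with unit balls and uses that torsion-free $k^\circ$-modules are flat, so $(U'^\circ\otimes V^\circ)/(U^\circ\otimes V^\circ)$ is torsion free. It controls the gap between $U^\circ\otimes V^\circ$ and the unit ball of the tensor seminorm by Lemma~\ref{reeslem}(iii), and handles discrete valuations by descent along a Gauss extension.
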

\begin{proof}
Set $W=U\otimes V$ and $W'=U'\otimes V$. If the valuation is not discrete, then $U'^\circ/\Ucirc$ is torsion free by Lemma~\ref{reeslem}(i), and hence $$(U'^\circ\otimes_{\kcirc}\Vcirc)/(\Ucirc\otimes_{\kcirc}\Vcirc)=(U'^\circ/\Ucirc)\otimes_{\kcirc}\Vcirc$$ is torsion free by Lemma~\ref{torlem} below. In view of Lemma~\ref{reeslem}(iii) this implies that also $W'^\circ/\Wcirc$ is almost torsion free and hence $W\into W'$ is an isometry by Lemma~\ref{reeslem}(i).

The case of a discrete valuation can be reduced to the above by tensoring with $k_r$, where $r\notin|k^\times|^\QQ$, and using Lemma~\ref{descent}.
\end{proof}

\begin{lem}\label{torlem}
Assume that $M,N$ are $\kcirc$-modules and $N$ is torsion free. If the torsion of $M$ is killed by $\pi$ (resp. almost vanishes, resp. vanishes), then the same is true for $M\otimes_{\kcirc}N$.
\end{lem}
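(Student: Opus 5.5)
Since $\kcirc$ is a valuation ring, torsion-free $\kcirc$-modules are flat: every finitely generated submodule of a torsion-free module is finitely generated torsion free, hence free. I will also use that a filtered colimit of flat modules is flat and that tensor products commute with filtered colimits. So $N$ is flat and $-\otimes_{\kcirc}N$ is exact. This flatness is the only real input; the rest is tracking the torsion submodule through an exact sequence.

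Let $T=M_\tor$ and $F=M/T$, which is torsion free. Tensoring $0\to T\to M\to F\to 0$ with the flat module $N$ gives an exact sequence
$$0\to T\otimes_{\kcirc}N\to M\otimes_{\kcirc}N\to F\otimes_{\kcirc}N\to 0.$$
Because $F$ and $N$ are both flat, $F\otimes_{\kcirc}N$ is flat, hence torsion free. Therefore
$$(M\otimes_{\kcirc}N)_\tor\subseteq T\otimes_{\kcirc}N.$$
The reverse inclusion also holds when $T$ is killed by a nonzero element, but only the inclusion is needed.

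Now each case follows directly. If $\pi T=0$, then $\pi$ kills $T\otimes N$, and hence kills the torsion of $M\otimes N$. If $T$ almost vanishes, then every $\pi\in\kcirccirc$ kills $T$, and the same argument applies with each such $\pi$. If $T=0$, then the torsion of $M\otimes N$ is zero.

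The only step needing care is the flatness of torsion-free modules over a valuation ring that need not be noetherian. This is standard: a finitely generated submodule of a torsion-free module is free, because finitely generated ideals of a valuation ring are principal and one can run the usual argument by induction on the number of generators. Flatness then follows by passing to the filtered colimit over finitely generated submodules.
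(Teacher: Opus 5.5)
Your proof is correct and follows the same route as the paper, whose proof simply cites the standard fact that a $\kcirc$-module is flat if and only if it is torsion free. You have spelled out how that fact applies: flatness of $N$ keeps $M_\tor\otimes_{\kcirc}N\to M\otimes_{\kcirc}N$ injective, and $(M/M_\tor)\otimes_{\kcirc}N$ is flat and hence torsion free, so $(M\otimes_{\kcirc}N)_\tor\subseteq M_\tor\otimes_{\kcirc}N$.
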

\begin{proof}
This easily follows from the standard fact that a $\kcirc$-module is flat if and only if it is torsion free.
\end{proof}

\subsection{Universality of $r$-orthogonality}
As another application let us prove the following simple result, which of course admits more straightforward proofs too.

\begin{lem} \label{rlem}
Assume that $l/k$ is an extension of real valued fields and $U$ is a normed $k$-vector space. Then elements $u_1\..u_n\in U$ are $r$-orthogonal for $r\in(0,1]$ if and only if their images $u_1\..u_n\in U_l$ are $r$-orthogonal over $l$.
\end{lem}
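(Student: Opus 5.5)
The plan is to follow the pattern of Lemma~\ref{isomlem}: translate $r$-orthogonality into a statement about an injective map of normed spaces with controlled torsion in the cokernel of unit balls, then base change. The direction ``$r$-orthogonal in $U_l$ $\Rightarrow$ $r$-orthogonal in $U$'' is immediate once we know that $U\into U_l$ is an isometry. That isometry follows from Lemma~\ref{isomlem} applied to the isometry $k\into l$, tensored with $U$. Indeed, a $k$-linear combination $\sum a_iu_i$ has the same norm in $U$ and in $U_l$, and the coefficients $a_i\in k$ are allowed in $l$ as well.

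For the converse, first reduce to the case of a non-discrete valuation. If $|k^\times|$ is discrete, replace $k$ by $k_s=k(t)$ with $s\notin|k^\times|^\QQ$ and $l$ by the completed compositum $l_s=l(t)$, which is again a Gauss extension. Lemma~\ref{descent} shows that $r$-orthogonality over $k$ is equivalent to $r$-orthogonality over $k_s$. The same lemma applied to $U_l$ over $l$ gives the analogous equivalence there, provided $U_l$ is weakly cartesian or we use the direct inspection argument from that proof. Moreover, $U_{l_s}=(U_{k_s})\otimes_{k_s}l_s$.

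So assume $|k^\times|$ is dense. Set $r_i=\|u_i\|$ and $W=\oplus k(r_i)$, and let $\phi\:W\to U$ send $e_i\mapsto u_i$. By Lemma~\ref{reeslem}(ii), $r$-orthogonality means two things: $\phi$ is injective, and the torsion of $C=\Coker(\phicirc)$ is killed by every $\pi$ with $|\pi|<r$ (in fact also by those with $|\pi|=r$). Now base change to $l$. We have $W_l=\oplus l(r_i)$ with orthogonal basis $e_i$ by Lemma~\ref{grlem0}/direct inspection, $\phi_l$ is injective by flatness of fields, and $\|u_i\|_{U_l}=r_i$ by the isometry above. The unit ball $W_l^\circ$ equals $\Wcirc\otimes_{\kcirc}\lcirc$ (orthogonal basis), while $U_l^\circ$ contains $\Ucirc\otimes_{\kcirc}\lcirc$ with almost zero cokernel by Lemma~\ref{reeslem}(iii). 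Hence $\Coker(\phicirc_l)$ is, up to an almost zero module, $C\otimes_{\kcirc}\lcirc$. Since $\lcirc$ is torsion free, Lemma~\ref{torlem} shows that the torsion of $C\otimes\lcirc$ is killed by each $\pi$ with $|\pi|<r$.

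The main obstacle is the almost-zero discrepancy. It could a priori create torsion in $\Coker(\phicirc_l)$ that is killed only by $\pi\epsilon$ with $\epsilon\in\kcirccirc$, not by $\pi$ itself. I would handle this by using the strict inequality in the criterion. Given $\pi$ with $|\pi|<r$, choose $\pi'$ with $|\pi|<|\pi'|<r$ and $\epsilon$ with $|\epsilon|<1$ and $\pi=\pi'\epsilon$. A torsion element $x$ of $\Coker(\phicirc_l)$ then satisfies that $\epsilon x$ comes from $C\otimes\lcirc$ and is torsion there, so $\pi'\epsilon x=0$. Here we must check that torsion in the image is the image of torsion; if this is awkward, I would instead argue directly with norms. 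For $v=\sum a_iu_i$ with $a_i\in l$, write $\epsilon v$ as an element of $\Ucirc\otimes\lcirc$, and then use that $C$ torsion-bounded by $\pi'$ gives $\max|a_i|r_i\le |\pi'|^{-1}\|v\|$ up to arbitrarily small loss. This yields $\|v\|\ge r'\max|a_i|r_i$ for all $r'<r$, and letting $r'\to r$ gives $r$-orthogonality.
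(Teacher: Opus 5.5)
Your proposal is correct and follows the paper's route: you reduce the discretely valued case to the dense one via a Gauss extension $k_s$ and Lemma~\ref{descent}, translate $r$-orthogonality with Lemma~\ref{reeslem}(ii), and compare $\Coker(\phicirc_l)$ with $C\otimes_{\kcirc}\lcirc$ using Lemma~\ref{reeslem}(iii) and Lemma~\ref{torlem}. You are more explicit than the paper about three points: the easy direction via Lemma~\ref{isomlem}, the equality $\|u_i\|_{U_l}=r_i$, and absorbing the almost-zero discrepancy through $\pi=\pi'\epsilon$. That last step is genuinely needed, also because $W_l^\circ=\Wcirc\otimes_{\kcirc}\lcirc$ holds only up to an almost zero module when $r_i\notin|k^\times|$, so $C\otimes_{\kcirc}\lcirc\to\Coker(\phicirc_l)$ may have an almost zero kernel, and your argument still handles that case.
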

\begin{proof}
The case of discrete valuation reduces to the general case by tensoring with some $k_s$, so assume that the valuation is not discrete. Then by Lemma~\ref{reeslem}(ii) the map $\phi\:W=\oplus_{i=1}^n ke_i\to U$ taking $e_i$ to $u_i$ is injective and the torsion of the cokernel $C$ of $\phicirc\:\Wcirc\to\Ucirc$ is killed by any $\pi$ with $|\pi|\le r$. Tensoring with $l$ yields a morphism $\phi_l\:W_l=\oplus_{i=1}^n le_i\to U_l$, and we set $C_l=\Coker(\phi_l^\circ)$. It is then clear from the diagram

$$\xymatrix{
0\ar[r]& \Wcirc_l\ar[r]^{\phicirc_l}& \Ucirc_l \ar[r]& C_l\ar[r]& 0\\
0\ar[r]&  \Wcirc\otimes_{\kcirc}\lcirc\ar[r]^{\phicirc\otimes_{\kcirc}\lcirc}\ar@{=}[u]& \Ucirc_l\otimes_{\kcirc}\lcirc\ar[r]\ar@{^{(}->}[u]& C\otimes_{\kcirc}\lcirc\ar[r]\ar@{^{(}->}[u]& 0.
}$$
that $C_l/(C\otimes_{\kcirc}\lcirc)=\Ucirc_l/(\Ucirc_l\otimes_{\kcirc}\lcirc)$, so this module almost vanishes by Lemma~\ref{reeslem}(iii). By Lemma~\ref{torlem} $(C\otimes_{\kcirc}\lcirc)_\tor$ is killed by any $\pi$ with $|\pi|<r$, hence the same is true for the torsion of $C_l$, and $u_1\..u_n$ are $r$-orthogonal over $l$ by Lemma~\ref{reeslem}(ii).
\end{proof}

As a corollary we obtain a criterion for failure of geometric multiplicativity. Consider a henselian real valued field $k$ (so, the valuation extends to $k^a$ uniquely). For an algebraic element $\alp\in k^a$ let $d_{\alp/k}=\inf_{c\in k}|c-\alp|$ denote the distance between $\alp$ and $k$.

\begin{cor}\label{rcor}
Assume that $l/k$ and $K/k$ are extensions of real valued henselian fields and there exists $\alp\in l$ which is algebraic over $k$ and satisfies the inequality $d_{\alp/K}<d_{\alp/k}$. Then $K\otimes l$ is not multiplicative.
\end{cor}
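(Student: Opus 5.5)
The plan is to exhibit an element of $K\otimes l$ whose tensor norm drops strictly below the product of norms. Since $d_{\alp/K}<d_{\alp/k}$, we can pick $c\in K$ with $|\alp-c|<d_{\alp/k}$. Here all distances are computed inside a common algebraically closed valued field containing $K$ and $l$. Such a field exists because both fields are henselian: the valuation on $k^a$ is unique, so $\alp$ has a well-defined absolute value over $K$ as well. We then consider the two elements $x=1\otimes\alp-c\otimes 1\in K\otimes l$ and its conjugate-type companions. The most direct route is to compare $K\otimes k(\alp)\subseteq K\otimes l$, which is an isometric embedding by Lemma~\ref{isomlem}. Thus it suffices to show that $K\otimes_k k(\alp)$ is not multiplicative.

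Step one uses Lemma~\ref{rlem}, applied with $U=k(\alp)$ and the pair $1,\alp$. The elements $1,\alp$ in $k(\alp)$ are $r$-orthogonal over $k$ with $r=d_{\alp/k}/|\alp|$ (or $r=1$ if $|\alp|\le d_{\alp/k}$, after normalizing). Indeed, $|a+b\alp|=|b|\,|\alp+a/b|\ge |b|d_{\alp/k}$, and a short case analysis gives $|a+b\alp|\ge r\max(|a|,|b\alp|)$. By Lemma~\ref{rlem} the same $r$-orthogonality holds for $1\otimes1,\,1\otimes\alp$ over $K$ in $K\otimes l$, so the tensor norm satisfies $\|1\otimes\alp-c\otimes1\|\ge r\max(|c|,|\alp|)$. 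In particular the tensor norm of $x$ is bounded below, roughly by $d_{\alp/k}$. More precisely, I would take the optimal $r$, namely the exact orthogonality constant of $\{1,\alp\}$ over $k$. With this choice $\|b\alp+a\|\ge|b|d_{\alp/k}$, and this inequality itself transfers, since it is equivalent to $r$-orthogonality with suitable normalization.

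Step two uses the minimal polynomial $f$ of $\alp$ over $k$, of degree $n$, with roots $\alp=\alp_1,\dots,\alp_n$. In $K\otimes k(\alp)=K[T]/(f)$ the element $f(c)=\prod_i(c-\alp_i)\in K$ factors as $x\cdot g(x)$ for a polynomial expression $g$, namely $f(c)\otimes1=(c\otimes 1-1\otimes\alp)\cdot h$ with $h=\frac{f(c)-f(T)}{c-T}$. If the tensor norm were multiplicative, it would restrict to a valuation on $K[T]/(f)$ extending $|\ |_K$. Evaluating through the embedding $T\mapsto\alp_j$ for the appropriate branch, the norm of $x$ would equal $|c-\alp_{j}|$ for some conjugate $\alp_j$. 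By Krasner-type/henselian uniqueness we get $|c-\alp_j|\le\max(|c-\alp|,|\alp-\alp_j|)$, and one wants this to be $<d_{\alp/k}$. The cleaner way is to avoid conjugates. A multiplicative norm on $K\otimes k(\alp)$ is a valuation on the field $K(\alp')$ generated over $K$, hence it is one of the finitely many extensions. Since $K$ is henselian, that extension is unique, and it is given by the embedding sending $\alp\mapsto\alp$ compatible with the chosen common field. So the norm of $x$ would be $|\alp-c|<d_{\alp/k}$, contradicting the lower bound $\|x\|\ge d_{\alp/k}$ from step one (take $b=1$, $a=-c$).

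The main obstacle is the identification in step two. We must justify that a multiplicative tensor norm on $K\otimes_k k(\alp)$ forces $K\otimes k(\alp)$ to be a field whose valuation is the unique henselian extension and agrees with the valuation in which $|\alp-c|$ was measured. I would argue this as follows. A multiplicative norm means the kernel is prime, so the reduced finite $K$-algebra has a single factor $K(\alp_j)$. Henselianity of $K$ gives the uniqueness of the extended valuation. The residual ambiguity of which conjugate $\alp_j$ corresponds to $\alp$ is harmless, because all $K$-embeddings give the same absolute values by henselianity of $K$, and $c\in K$. A second subtlety is that the lower bound in step one must be transferred exactly, not just up to constants. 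This is why $r$ should be chosen optimally, as in step one, so that Lemma~\ref{rlem} yields $\|1\otimes\alp-c\otimes1\|\ge d_{\alp/k}$ on the nose.
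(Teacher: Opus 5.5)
Your route is the paper's: reduce to $l=k(\alpha)$ by Lemma~\ref{isomlem}, note that $1,\alpha$ are $r$-orthogonal over $k$ for $r=d_{\alpha/k}/|\alpha|$, and transfer this by Lemma~\ref{rlem} to get $\|1\otimes\alpha-c\otimes 1\|\ge d_{\alpha/k}$ for the tensor norm. You then contradict this bound using the value of the valuation. Step one is fine. The gap is in step two, exactly where you locate the main obstacle.

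Put $A=K\otimes_k k(\alpha)=K[T]/(f)$. A prime kernel $\mathfrak p$ only tells you that $A/\mathfrak p\cong K[T]/(g)$ for \emph{some} irreducible factor $g$ of $f$ over $K$. It does not force $A_{\rm red}$ to have a single factor: the seminorm $(a,b)\mapsto|a|$ on $K\times K$ is multiplicative with prime kernel. Your claim that the choice of $\alpha_j$ is harmless holds only inside one $K$-conjugacy class. Henselianity of $k$ makes $|\alpha|$ and $d_{\alpha/k}$ independent of the embedding. But if $f$ is reducible over $K$ and $g$ is not the minimal polynomial over $K$ of the image of $\alpha$ in your common field $\Omega$, then $|\alpha_j-c|$ has nothing to do with $|\alpha-c|_\Omega$. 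Your Krasner estimate only gives $|\alpha_j-c|\le d_{\alpha/k}$, which is compatible with step one. Concretely, take $k=\mathbb{Q}_p$ with $p$ odd, $K=l=k(\sqrt p)$, $\alpha=\sqrt p$ and $c=\sqrt p\in K$. Then $|\alpha-c|_\Omega=0$, while the factor $g=T+\sqrt p$ gives $|{-2\sqrt p}|=|\sqrt p|=d_{\alpha/k}$. So the assertion that the valuation ``is given by the embedding sending $\alpha\mapsto\alpha$ compatible with the chosen common field'' is exactly what needs proof.

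The missing ingredient is a comparison of kernels. The semivaluation $|\ |_\Omega$ induced on $A$ by $\Omega$ restricts to the given valuations of $K$ and $k(\alpha)$, hence $|z|_\Omega\le\|z\|$ for all $z\in A$. If $\|\ \|$ were multiplicative, then $\ker\|\ \|\subseteq\ker|\ |_\Omega$. Both are prime ideals of the Artinian ring $A$, hence maximal and therefore equal. On the field $A/\mathfrak p$, two valuations satisfying $|z|_\Omega\le\|z\|$ coincide (apply the inequality to $z$ and $z^{-1}$). Hence $\|1\otimes\alpha-c\otimes 1\|=|\alpha-c|_\Omega<d_{\alpha/k}$, which is the contradiction you want. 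The paper handles this point differently: it first discards the case that $K\otimes k(\alpha)$ is not a field, in which multiplicativity fails. When it is a field, all roots of $f$ are $K$-conjugate, so the henselian uniqueness argument identifies the valuation exactly as you intended.
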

\begin{proof}
We claim that the pair of elements $1,\alp\in l$ is $r$-orthogonal over $k$ if and only if $r\le r_0=d_{\alp/k}/|\alp|$. Indeed, any linear combination $c\alp+a$ with $c\neq 0$ can be rescaled by $c^{-1}$, hence checking that $1,\alp$ are $r$-orthogonal reduces to testing only linear combinations of the form $\alp-a$, which makes our claim obvious.

Now let us prove the claim. By Lemma~\ref{isomlem} it suffices to prove that $K\otimes k(\alp)$ is not multiplicative, hence we can replace $l$ by $k(\alp)$. If $L=K\otimes l$ is not a field, then the multiplicativity fails, so we can assume that it is a field. Choose any $r$ with $d_{\alp/K}<r<d_{\alp/k}$. Then $1,\alp$ are $r$-orthogonal over $k$, but not over $K$ with respect to the valuation of $L$. Therefore Lemma~\ref{rlem} implies that the tensor norm on $L$ is not a valuation.
\end{proof}

\subsection{Criteria of geometric multiplicativity}
We can also use $r$-orthogonality to provide a criterion when the multiplicativity is preserved by a base change.

\begin{lem}\label{mullem}
Let $l/k$ and $K/k$ be extensions of real valued fields. Assume that $L=K\otimes l$ is a field and $[l:k]=n<\infty$. Provide $L$ with an extension $|\ |$ of the valuation of $K$ and assume that for any $r\in(0,1)$ there exists a basis $\alp_1\..\alp_n\in l$ which is an $r$-orthogonal basis of $L$ over $K$ with respect to the valuation of $L$. Then $L/K$ is unibranch and the tensor norm $\|\ \|$ of $K\otimes l$ coincides with $|\ |$. In particular, $\|\ \|$ is multiplicative.
\end{lem}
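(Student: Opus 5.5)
We aim to show $\|x\|=|x|$ for every $x\in L$; unibranchness and multiplicativity then follow at once. Indeed, if $|\ |'$ is any other extension of the valuation of $K$ to $L$, the same argument applied to $|\ |'$ would give $\|\ \|=|\ |'$, so $|\ |'=|\ |$. Alternatively, since $L$ is a finite extension of $K$ and every $K$-valuation on $L$ is bounded by the tensor norm, which is a valuation, we get $\cM(L)=\{|\ |\}$. Multiplicativity is immediate because $|\ |$ is a valuation.

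One inequality is formal. The valuation $|\ |$ on $L$ restricts to the valuations of $K$ and of $l$. The latter holds because $l\subseteq L$ and $|\ |$ extends the valuation of $k$; we use the fixed real valued structure on $l$, and I assume the extension of the valuation to $L$ is taken compatibly with $l$, which is implicit in the statement. Hence, for any presentation $x=\sum_j c_j\otimes \beta_j$, the ultrametric inequality and multiplicativity give $|x|\le\max_j|c_j||\beta_j|$. Taking the infimum over presentations yields $|x|\le\|x\|$.

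For the reverse inequality, fix $r\in(0,1)$ and choose a basis $\alp_1\..\alp_n\in l$ as in the hypothesis. Since $L=K\otimes l$ is a field of degree $n$ over $K$, the $\alp_i$ also form a $K$-basis of $L$. So each $x\in L$ can be written as $x=\sum_i c_i\alp_i$ with $c_i\in K$, and this is a presentation of $x$ as an element of $K\otimes l$. Therefore
$$\|x\|\le\max_i|c_i|\,|\alp_i|\le r^{-1}|x|,$$
where the first step uses the definition of the tensor norm together with the fact that the norm on $l$ is its valuation, and the second step is the $r$-orthogonality of the $\alp_i$ over $K$ with respect to $|\ |$. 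Letting $r\to1$ gives $\|x\|\le|x|$.

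The only point requiring care is the identification of the norm on the $l$-factor with the restriction of $|\ |$. This is exactly what makes both the first inequality and the bound $\|c_i\otimes\alp_i\|\le|c_i||\alp_i|$ valid, and I expect it to be the subtle step. Otherwise the argument consists only of these two inequalities.
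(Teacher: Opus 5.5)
Your proof that $\|x\|=|x|$ for all $x\in L$ is correct. The reverse inequality is exactly the paper's argument. You get $|\ |\le\|\ \|$ formally from the compatibility of $|\ |$ with the valuations of $K$ and $l$, whereas the paper reads it off from $\|\ \|_{\mathrm{sp}}=|\ |$ after first proving unibranchness.

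The gap is in the unibranchness claim itself. In this paper, unibranch means that the valuation of $K$ has a \emph{unique} extension to the abstract field $L$, and neither of your two arguments proves this.
In the first, running ``the same argument'' for another extension $|\ |'$ gives at most $|\ |'\le\|\ \|$. The reverse inequality used $r$-orthogonality of the $\alpha_i$ with respect to $|\ |$, which is not assumed for $|\ |'$.
In the second, the assertion that every extension of the valuation of $K$ to $L$ is bounded by the tensor norm holds only for extensions whose restriction to $l$ is the given valuation of $l$. A priori $|\ |'$ could restrict to a different extension of the valuation of $k$ to $l$, since nothing you have used excludes that $l/k$ has several. Such an $|\ |'$ is not dominated by $\|\ \|$.
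So what you have shown is that $\mathcal{M}(K\otimes l)$ is a single point, i.e.\ spectral multiplicativity, not unibranchness.

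The missing input is the classical degree count over completions: for a finite extension $L/K$ one has $\sum_i[\widehat{L}_i:\widehat{K}]\le[L:K]$, the sum running over all extensions of the valuation. Your hypothesis says that $(L,|\ |)$ is $r$-cartesian over $K$ for all $r<1$. Hence $[\widehat{L}:\widehat{K}]=[L:K]$ by Lemma~\ref{cartlem}, so $|\ |$ is the only extension; this is how the paper begins.

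Alternatively, restrict to $k$-linear combinations. This shows that the $\alpha_i$ form an $r$-orthogonal basis of $l$ over $k$, so $l/k$ is unibranch by the same count. Then any extension $|\ |'$ of the valuation of $K$ to $L$ restricts to the given valuation on $l$, hence satisfies $|\ |'\le\|\ \|=|\ |$. Two valuations on a field with $|\ |'\le|\ |$ coincide. With either addition your proof is complete.
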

\begin{proof}
Since $L$ is weakly cartesian, $[\hatL:\hatK]=[L:K]$ and hence $L/K$ is unibranch. This already shows that $K\otimes l$ is spectrally multiplicative and $\|\ \|_\sp=|\ |$. Assume to the contrary that the tensor norm $\|\ \|$ of $L$ is not multiplicative. Take any $x\in L$ with $\|x\|>|x|$ and choose $r<1$ so that $r\|x\|>|x|$. Let $\alp_1\..\alp_n\in l$ be an $r$-orthogonal basis of $L/K$ and present $x$ as $x=\sum c_i\alp_i$, where $c_i\in K$. Then $|x|\ge r\max(|c_i|\cdot|\alp_i|)\ge r\|x\|$, a contradiction.
\end{proof}


This general criterion can be made quite explicit in many practical situations. One such case was already established in Lemma~\ref{grlem}. Here is a subtler case when defect can happen. In fact, the claim below is not covered by Lemma~\ref{grlem} precisely when $l/k$ has defect.

\begin{cor}\label{mulcor}
Let $l/k$ and $K/k$ be extensions of real valued fields. Assume that $k$ and $K$ are henselian, $[l:k]=p=\cha(\tilk)$, and $\alp\in l$ is such that $d_{\alp/k}>0$. Then $d_{\alp/k}=d_{\alp/K}$ if and only if $L=K\otimes l$ is a field and its valuation coincides with the tensor norm.
\end{cor}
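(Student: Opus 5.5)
We prove the two implications separately. The direction ``$\Leftarrow$'' is the contrapositive of Corollary~\ref{rcor}, and the direction ``$\Rightarrow$'' will come from Lemma~\ref{mullem}, applied to the basis $1,\alp,\dots,\alp^{p-1}$ suitably recentered.

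First, two preliminary observations.
\begin{itemize}
\item Since $d_{\alp/k}>0$, the element $\alp$ does not lie in $k$. As $[l:k]=p$ is prime, we get $l=k(\alp)$.
\item We have $d_{\alp/K}\le d_{\alp/k}$ trivially, because $k\subseteq K$.
\end{itemize}

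\textbf{The direction ``$\Leftarrow$''.} Suppose $L$ is a field whose valuation equals the tensor norm. If we had $d_{\alp/K}<d_{\alp/k}$, Corollary~\ref{rcor} would say that $K\otimes l$ is not multiplicative, a contradiction. Hence $d_{\alp/K}=d_{\alp/k}$.

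\textbf{The direction ``$\Rightarrow$'': $L$ is a field.} Assume $d_{\alp/K}=d_{\alp/k}=:d>0$. Suppose $L$ is not a field. Since $p$ is prime, the minimal polynomial $f$ of $\alp$ over $k$ then acquires a root in $K$ (I would make this step precise via the Galois/degree structure of $f$ over $K$). Because $K$ is henselian, that root would be a conjugate $\alp'$ of $\alp$ lying in $K$. Distinct conjugates satisfy $|\alp'-\alp|\le d$. So $d_{\alp/K}<d$ is exactly the statement that $K$ comes closer to $\alp$ than $k$ does.

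I expect the cleanest route is to take $L$ to be $K\otimes l$ and show directly that every extension of the valuation gives the tensor norm. That makes $\cM(L)$ a single point, and a reduced-ring argument then forces $L$ to be a field. I would handle the possibly non-reduced or product case (inseparable $\alp$) by noting that a nilpotent element would have spectral seminorm $0$ but positive tensor norm, which contradicts the orthogonality established below.

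\textbf{Orthogonality over $k$.} Fix $r<1$ and choose $c\in k$ with $|\alp-c|\ge d$ and $|\alp-c|<d/r^{1/p}$; this is possible since $d=\inf_{c}|\alp-c|$. Set $\beta=\alp-c$. The key claim is that $1,\beta,\dots,\beta^{p-1}$ is $r'$-orthogonal over $k$, with $r'\to1$ as $r\to1$.

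The underlying fact is that for a degree-$p$ defect extension, a nearly-closest recentering makes the powers of $\beta$ almost orthogonal. To prove it, write a combination $\sum a_i\beta^i$ as $a\prod(\beta-\gamma_j)$ over $k^a$. Then bound $|\beta-\gamma_j|\ge d_{\beta/k(\gamma_j)}$, using that $[k(\gamma_j):k]<p$ forces distance at least roughly $d$ (a Krasner-type argument). This is the step I expect to be the main obstacle: one must control $|\beta-\gamma|$ for $\gamma$ of degree less than $p$, and I would use that $p$ is prime so that $k(\gamma)\cap l=k$.

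\textbf{Transfer to $K$ and conclusion.} Since $d_{\alp/K}=d$, the same estimate works over $K$ with the same $c$. The bound $|\beta|\le d/r^{1/p}\le d_{\beta/K}/r^{1/p}$ gives the analogous orthogonality of $1,\beta,\dots,\beta^{p-1}$ over $K$. Thus for each $r<1$ we get an $r'$-orthogonal basis of $L/K$ consisting of elements of $l$, with $r'\to1$. Lemma~\ref{mullem} then shows that $L/K$ is unibranch and that the tensor norm equals the valuation, hence is multiplicative.
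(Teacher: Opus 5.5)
\textbf{The $\Leftarrow$ direction and the field step.} Your ``$\Leftarrow$'' direction is exactly the paper's, as the contrapositive of Corollary~\ref{rcor}. The ``$L$ is a field'' step of ``$\Rightarrow$'', however, fails.

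The claim that, since $p$ is prime, a reducible $f$ must acquire a root in $K$ is false. Take $p\ge5$, $k=\tilde k((\pi))$ with $\tilde k=\mathbb{F}_p(t_1,\dots,t_p)$, and $l=k(\alpha)$ with $\alpha$ a root of the generic polynomial $f=X^p+t_1X^{p-1}+\dots+t_p$. This is an unramified extension of degree $p$ whose normal closure has group $S_p$. Let $K=\tilde K((\pi))$, where $\tilde K$ is the fixed field of $S_2\times S_{p-2}$.
\begin{itemize}
\item Over $K$, $f$ factors into irreducible factors of degrees $2$ and $p-2$, so $K\otimes l$ is not a field.
\item Yet $d_{\alpha/k}=d_{\alpha/K}=1$ for every embedding $l\hookrightarrow K^a$, because the residue of $\alpha$ is not in $\tilde K$.
\end{itemize}
So the forward implication as stated needs an extra hypothesis. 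The paper's proof has the same hole: its discrete-case claim that $1,\alpha,\dots,\alpha^{p-1}$ is an orthogonal basis of $K(\alpha)/K$ fails in this example. Presumably $l/k$ should be wild, as in Example~\ref{mulexam}, or purely inseparable.

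For wild $l/k$ your step can be made precise. The ramification subgroup of the Galois group of the normal closure is a nontrivial normal $p$-subgroup of a transitive group of degree $p$. Hence it has order $p$, and the group lies in $\mathrm{AGL}(1,p)$. The intransitive subgroups of $\mathrm{AGL}(1,p)$ have order prime to $p$ and fix a point.

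Even then, a root $\alpha'\in K$ only gives $d_{\alpha/K}\le|\alpha-\alpha'|\le d$, not $<d$; equality occurs for almost unramified $l/k$. When $L$ is not a field, $d_{\alpha/K}$ depends on the chosen $k$-embedding $l\hookrightarrow K^a$. You must read the hypothesis as holding for every such embedding and use the one sending $\alpha$ to $\alpha'$.

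Your fallback via Lemma~\ref{mullem} is circular, since that lemma assumes $L$ is a field. Irreducibility over $K$ must come first anyway: if $[K(\beta):K]<p$, then $\beta$ is itself a root of a polynomial of degree $<p$ over $K$, and no orthogonality estimate can hold.

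\textbf{The orthogonality estimate.} The second gap is the estimate you flag as the main obstacle: the tool you propose does not give it. Krasner's lemma (which needs separability anyway) only yields $|\beta-\gamma|\ge r_{\beta/k}$, the distance from $\beta$ to its conjugates. This is strictly smaller than $d$ precisely for the non-almost-unramified extensions this corollary targets. Knowing $k(\gamma)\cap l=k$ is not the relevant input.

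The missing idea is averaging over conjugates.
\begin{itemize}
\item Let $m=[K(\gamma):K]<p$. Then $|m|=1$.
\item Since $m$ is prime to $p=[K(\beta):K]$, every $K$-conjugate $\gamma'$ of $\gamma$ is the image of $\gamma$ under an automorphism of $K^a$ fixing $\beta$. This automorphism preserves $|\ |$ because $K$ is henselian.
\item Hence $c=\frac1m\mathrm{Tr}_{K(\gamma)/K}(\gamma)\in K$ satisfies $|\beta-c|\le\max_{\gamma'}|\beta-\gamma'|=|\beta-\gamma|$, i.e. $|\beta-\gamma|\ge d_{\beta/K}$.
\item Write $g=\sum_{i<p}a_iX^i=a\prod_j(X-\gamma_j)$ and use $\max_i|a_i||\beta|^i=|a|\prod_j\max(|\beta|,|\gamma_j|)$. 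This gives $|g(\beta)|\ge(d_{\beta/K}/|\beta|)^{p-1}\max_i|a_i\beta^i|$.
\end{itemize}
Run this directly over $K$; there is nothing to transfer from $k$, and one only uses $d_{\beta/K}=d$.

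With these two repairs your route becomes a self-contained elementary argument that treats discrete and non-discrete valuations uniformly. It replaces the paper's appeal to Gabber--Ramero, which, as the paper notes, is stated there only for tamely closed fields. Lemma~\ref{mullem} then concludes exactly as in the paper.
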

\begin{proof}
The opposite implication is covered by Corollary~\ref{rcor}, so assume that $d_{\alp/k}=d_{\alp/K}$. If the valuation of $k$ is discrete, then replacing $\alp$ by $\alp-c$ with $c\in k$ we can assume that $|\alp|=d_{\alp/K}$. In this case, $1,\alp\..\alp^{p-1}$ is an orthogonal basis of $K(\alp)/K$, hence $L=K(\alp)$ and then the tensor norm is the valuation by Lemma~\ref{mullem}.

Assume now that the valuation is not discrete. Then there exists a sequence $\alp_i=(\alp-c_i)/\pi_i$ with $c_i,\pi_i\in k$, such that $|\alp-c_i|$ tends to $d_{\alp/K}$ and $|\alp_i|$ monotonically increases and tends to 1. By \cite[Proposition~6.3.13]{GRbook}, for any $r<1$ there exists $i$ such that $1,\alp_i\..\alp_i^{p-1}$ is $r$-orthogonal (in fact, this is only claimed in loc.cit. for the tamely closed case, but the proof works in general). Now, the claim follows from Lemma~\ref{mullem}.
\end{proof}

We can summarize this in the following example.

\begin{exam}\label{mulexam}
(i) As in Corollary~\ref{mulcor} assume that $k$ and $K$ are henselian and $l=k(\alp)$ is wild, separable of degree $p$ over $k$. Let $r_{\alp/k}=\min_i|\alp-\alp_i|$ denote the minimal distance between $\alp$ and other roots of its minimal polynomial (in fact, they all are equidistant). Note that $r_{\alp/k}\le d_{\alp/k}$ by Krasner's lemma and $l/k$ is almost unramified if and only if $r_{\alp/k}=d_{\alp/k}$.

(a) The tensor norm $\|\ \|$ is spectrally multiplicative if and only if $K\otimes l$ is a field  if and only if $\alp\notin K$ if and only if $r_{\alp/k}\le d_{\alp/K}$.

(b) By Corollary \ref{mulcor} the tensor norm is multiplicative if and only if $d_{\alp/k}=d_{\alp/K}$. In particular, it is not spectral whenever $r_{\alp/k}\le d_{\alp/K}<d_{\alp/k}$.

(c) More generally it is easy to see that the tensor norm is spectral if and only if either $r_{\alp/k}=d_{\alp/k}$ or $d_{\alp/K}=d_{\alp/k}$.

(ii) In fact, $l/k$ is not almost unramified if and only if there exists $a\in k$ such that $|\alp^p-a|<\inf_{c\in k}|a-c^p|$, e.g. see \cite[Proposition~6.1.4]{temst}. So, the mechanism for the failure of geometric spectrality is the same as with geometric non-reducedness in commutative algebra: $d_{\alp/K}<d_{\alp/k}$ and $r_{\alp/k}<d_{\alp/k}$, then if there exists $\alp'\in K$ such that $|\alp'-\alp|<d_{\alp/k}$. Then by Lemma~\ref{rlem} $\|\alp-\alp'\|=d_{\alp/k}$, but $\|\alp-\alp'\|^p=\|\alp^p-\alp'^p\|<(d_{\alp/k})^p$.

(iii) Since up to tame extensions any wild extension can be split into composition of extensions of degree $p$, the above implies that a finite extension $l/k$ with a henselian $k$ is geometrically spectral if and only if it is almost unramified.
\end{exam}

\section{Geometric spectral multiplicativity}
In this section, we conclude our study of geometric spectral multiplicativity.

\subsection{Transcendental extensions}
Consider the following valuative analogue of purely transcendental extensions in commutative algebra. A $k$-valuation $|\ |_l$ on $l=k(t)$ is called {\em almost $k$-split} if it is an infimum of translated Gauss valuations $|\ |_{a_i,r_i}$, where $|\ |_{a,r}$ is defined by $|\sum_jc_j(t-a)^j|_{a,r}=\max_j r^j|c_j|$. If $k$ is algebraically closed, then $k(t)$ is automatically almost split. For example, if $d_{t/k}>0$, then this follows from the classification of points on Berkovich affine line over $\hatk$ in \cite[\S1.4]{berbook}. Otherwise $l\subset\hatk$, and the image $a\in\hatk$ of $t$ defines a classical point $|\ |_{a,0}$ of $\bbA^1_\hatk$, which is the semivaluation on $\hatk[t]$ with kernel $(t-a)$, and of course $|\ |_{a,0}$ is also an infimum of Gauss valuations $|\ |_{a,r}$ dominating it. The restriction of $|\ |_{a,0}$ onto $k[t]$ is the valuation induced from $l$.

\begin{lem}\label{translem}
Assume that $l=k(t)$ is a real valued field whose valuation is almost $k$-split. Then a seminormed $k$-algebra $\cA$ is spectral, spectrally multiplicative or multiplicative if and only if the seminormed $\cA_l$ is so.
\end{lem}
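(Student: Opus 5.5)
Since $\cA\to\cA_l$ is an isometry by Lemma~\ref{isomlem} (applied to $k\into l$), the ``if'' directions are immediate: a seminorm restricted along an isometric embedding stays spectral or multiplicative, and for spectral multiplicativity we note that $|\ |_{\cA,\sp}$ is the restriction of $|\ |_{\cA_l,\sp}$. This last fact is not literally automatic, but it follows from Lemma~\ref{prelem}(ii) together with the isometry statement applied to $\cA^\sp\into\cA^\sp_l$. So the real content is the ``only if'' direction. Here I would first treat a single Gauss valuation $|\ |_{a,r}$ and then pass to infima.

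\emph{Step 1: Gauss extensions.} Let $l_{a,r}=(k(t),|\ |_{a,r})$. After replacing $t$ by $t-a$ we may take $a=0$, so $k[t]$ has orthogonal basis $1,t,t^2,\dots$ with $|t^j|=r^j$. By a direct inspection, as in Lemma~\ref{grlem0} and Lemma~\ref{descent}, the tensor seminorm on $\cA\otimes_k k[t]=\cA[t]$ is the Gauss seminorm $\|\sum f_jt^j\|=\max_j|f_j|r^j$. Classical Gauss-lemma arguments then apply. If $|\ |$ is multiplicative, look at the leading terms of $f$ and $g$ for the maximum, choosing the extremal indices to be the smallest and the largest maximizing indices; this shows $\|fg\|=\|f\|\|g\|$. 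The same argument with $g=f$ gives the spectral case, and in fact it suffices to compare $\|f^{2^m}\|$. Equivalently, the graded reduction of $\cA[t]$ is $\tilcalA_\gr[\tilde t]$, a polynomial ring over $\tilcalA_\gr$ with $\tilde t$ in degree $r$, and polynomial rings preserve reducedness and integrality. Localizing from $\cA[t]$ to $\cA\otimes_k k(t)$ is handled as in the proof of Corollary~\ref{precor}. The tensor norm on $\cA\otimes k(t)$ is $|x/g|=\|x\|/|g|$ for $g\in k[t]$, since $k[t]$ is multiplicative and $\cA[t]\to\cA\otimes k(t)$ is isometric by Lemma~\ref{isomlem}. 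Spectral multiplicativity reduces to multiplicativity of $\cA^\sp$ via Lemma~\ref{prelem}(ii), because $(\cA_l)^\sp=(\cA^\sp\otimes l)^\sp$, and then $\cA^\sp\otimes l$ is multiplicative.

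\emph{Step 2: infima of Gauss valuations.} Now let $|\ |_l=\inf_i|\ |_{a_i,r_i}$, where we may assume the family is directed (decreasing balls). Otherwise, the trivial case $l\subset\hatk$ is covered by the limit of $|\ |_{a,r}$ as $r\to 0$ with $a\in\hatk$ approximated from $k$. The key claim is that the tensor seminorm on $\cA\otimes_k k[t]$ for $|\ |_l$ equals $\inf_i$ of the Gauss tensor seminorms $\|\ \|_i$. The inequality $\le$ holds because each $|\ |_{a_i,r_i}\ge|\ |_l$. For $\ge$, take a presentation $x=\sum f_j\otimes g_j$ with $g_j\in k[t]$. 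Since the $g_j$ span a finite-dimensional subspace, we can pick a basis of this subspace and write $x$ in it. The $\|\ \|_i$-norm of $x$ is bounded by $\max|f_j|\,|g_j|_{a_i,r_i}$, and $|g_j|_{a_i,r_i}\to|g_j|_l$ for each fixed $g_j$. So $\inf_i\|x\|_i\le\max_j|f_j||g_j|_l$, and taking the infimum over presentations gives the claim. With this in hand, each $\|\ \|_i$ is multiplicative (resp. power-multiplicative) by Step 1. A directed infimum of multiplicative seminorms is multiplicative, because both sides of $\|fg\|_i=\|f\|_i\|g\|_i$ converge along the decreasing net. The same holds for power-multiplicativity. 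We then extend to $k(t)$ as in Step 1.

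The main obstacle is exactly this exchange of the infimum with the tensor seminorm, and the directedness it requires. Directedness holds for decreasing nested balls, and I would reduce to that case explicitly. The exchange itself is handled by the finite-presentation argument above, which only uses pointwise convergence $|g|_{a_i,r_i}\to|g|_l$ on $k[t]$.
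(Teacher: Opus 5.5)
Your proposal is correct and follows the paper's route. You reduce spectral multiplicativity to multiplicativity via Lemma~\ref{prelem}(ii), handle a single Gauss valuation by the Gauss lemma (the paper takes the minimal maximizing index $j$ and compares the coefficient of $t^{nj}$ in $x^n$), and pass to infima of Gauss valuations, which the paper dismisses in one line and you justify by exchanging the infimum with the tensor seminorm along the nested family of balls (this family is automatically totally ordered, since $|a_i-a_{i'}|\le\max(r_i,r_{i'})$). In the Gauss-lemma step, take the \emph{same} extremal index type for both factors, say the smallest maximizing one, because mixing ``smallest'' for one factor with ``largest'' for the other destroys the strict inequalities, and in the power-multiplicative case only $|a^2|=|a|^2$ is available.
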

\begin{proof}
The claim for spectral multiplcativity follows from the claim for multiplicativity, because we can replace $\cA$ with its spectralization by Lemma~\ref{prelem}(ii). Furthermore, it suffices to deal with the case when $l$ is provided with a Gauss valuation because multiplicativity and power-multiplicativity are preserved under limits of valuations. In this case the multiplicativity is just Gauss lemma, and power-multiplicativity is proved similarly but let us sketch the argument for completeness . Assume that $\cA$ is power-multiplicative, and let us show that $|x^n|=|x|^n$ for $x=\sum_ia_i\otimes l_i\in\cA_l$. Multiplying by an appropriate element of $k[t]$ we can assume that $l_i\in k[t]$ and hence $x=\sum_i a_it^i$. Choose the minimal $j\in\NN$ such that $|a_jt^j|=\max_i|a_it^i|=|x|$. Then $x^n=\sum_ib_it^i$ with $|b_{nj}|=|a_j|^n$, which implies the claim.
\end{proof}

\begin{rem}
Assume that $\cA$ as above is normed. It is easy to see that if $d_{t/k}>0$, then $\cA_l$ is normed as well, but for $l=k(t)\subset\hatk$ the seminorm on $l\otimes_kl$ is not a norm because $\|t\otimes 1-1\otimes t\|=0$.
\end{rem}

\subsection{Spectral multiplicativity}
Next we study how spectral multiplicativity behaves in the case of algebraic ground field extensions.

\begin{lem}\label{premullem}
Let $l/k$ be a finite extension of real valued fields, and let $\cA$ be a spectrally multiplicative seminormed $k$-algebra with $K=\Frac(\cA^\sp)$. Then the following conditions are equivalent:
\begin{itemize}
\item[(i)] The base change $\cA_l$ is spectrally multiplicative.
\item[(ii)] The reduction of $K^h\otimes_{k^h} l^h$ is a field.
\item[(iii)] The reduction of $\hatK\otimes_\hatk\hatl$ is a field.
\item[(iv)] No non-trivial separable subextension $k^h\subsetneq l'^h\subseteq l^h$ admits a $k$-embedding into $\hatK$ (or  $K^h$).
\end{itemize}
\end{lem}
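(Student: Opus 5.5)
By Corollary~\ref{precor}, (i) holds if and only if the real valued field $K_l=K\otimes_k l$ is spectrally multiplicative. By Lemma~\ref{prelem}(iii) this happens if and only if $\cM(K\otimes_k l)$ has a single maximal point. So the plan is to compute $\cM(K\otimes_k l)$ explicitly and compare it with the finite algebras in (ii) and (iii).

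\textbf{Step 1: points of $\cM(K\otimes_k l)$.} Since $K\otimes_k l$ is a finite $K$-algebra, any bounded semivaluation factors through a residue field $K\otimes_k l\to F$, so it is a valuation on a finite extension of $K$. Its restrictions to $K$ and to $l$ are bounded multiplicative seminorms on these fields, dominated by the given valuations, hence equal to them. Two distinct valuations of this kind cannot be comparable. If they live on the same residue field $F$, their restrictions to $K$ agree, and both are dominated by the spectral norm of $F$ over $K$; for the finitely many extensions of a valuation to a finite extension, comparability forces equality. If they live on different residue fields, their kernels differ. Hence $\cM(K\otimes_kl)$ is a finite discrete set, and (i) holds if and only if it consists of exactly one point.

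\textbf{Step 2: (i)$\Leftrightarrow$(iii).} The map $K\otimes_kl\to\hatK\otimes_{\hatk}\hatl$ has dense image and induces an isomorphism of spectra, since both sides have the same completion. Over the complete field $\hatK$ the valuation extends uniquely to every finite extension. Hence the points of $\cM(\hatK\otimes_{\hatk}\hatl)$ are in bijection with its maximal ideals, and there is exactly one point if and only if the finite ring $\hatK\otimes_{\hatk}\hatl$ is local. For a finite algebra over a field this is the same as saying that its reduction, i.e.\ its quotient by the nilradical, is a field.

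\textbf{Step 3: (ii)$\Leftrightarrow$(iii).} The same argument works over the henselian field $K^h$, where valuations also extend uniquely to finite extensions. We have $\cM(K^h\otimes_{k^h}l^h)=\cM(K\otimes_kl)$ because the henselizations are immediate and dense in the completions, so all three algebras have the same completion. Alternatively, one can argue directly that maximal ideals of a finite $K^h$-algebra and of its completion correspond bijectively: by Krasner's lemma, separable algebraic elements of $\hatK$ lie in $K^h$, so idempotents lift.

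\textbf{Step 4: (iv).} Split $l^h/k^h$ as a separable part $l_s^h$ followed by a purely inseparable part. Tensoring with a purely inseparable extension does not change locality, so (ii) reduces to $K^h\otimes_{k^h}l_s^h$ being a field. This algebra is étale, so we need it to be a field. If some nontrivial separable $l'^h$ embeds into $K^h$, then $K^h\otimes_{k^h}l'^h$ has a factor $K^h$ together with further factors, so it is not a field, and neither is $K^h\otimes_{k^h}l^h$. The equivalence of "embeds into $\hatK$" with "embeds into $K^h$" is again Krasner's lemma.

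\textbf{Main obstacle.} The converse in Step~4 is the hard part: if $K^h\otimes_{k^h} l_s^h$ is not a field, I must produce a nontrivial subextension embeddable into $K^h$. For arbitrary fields this fails for non-Galois extensions (e.g.\ $\QQ(\sqrt[3]2)$ versus $\QQ(\omega\sqrt[3]2)$), so the argument has to exploit the structure here. I would first try to show that $k^h$ is separably closed in $K^h$ modulo a controlled part, by passing to Galois closures and using the decomposition of $\mathrm{Gal}$ via henselianity. If that fails, I would interpret (iv) as "no $k$-embedding of a nontrivial subextension of the normal closure compatible with $l^h$" and adjust the argument accordingly.
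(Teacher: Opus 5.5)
Your Steps 1--3 are correct and follow essentially the paper's route. You reduce to $K\otimes_kl$ by Corollary~\ref{precor} and pass to $\hatK\otimes_{\hatk}\hatl$, which has the same spectrum. Over a complete or henselian base the points of the spectrum correspond to the maximal ideals, i.e.\ to the field factors of the reduction. Henselizations are compared with completions using that $K^h$ is separably closed in $\hatK$. Your argument that an embedding of a nontrivial separable $l'^h$ into $K^h$ destroys (ii) is also fine.

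The gap is the converse (iv)$\Rightarrow$(ii), which you leave open. It cannot be closed by exploiting henselianity, because it is false.

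Your cubic example does not witness this: $\QQ(\omega\sqrt[3]2)\cong\QQ(\sqrt[3]2)$ over $\QQ$, so (iv) fails there too. With an $S_3$ Galois closure the two conditions in fact always agree.

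A genuine counterexample: take $k=\QQ((t))$ with the $t$-adic valuation, $l=k(\sqrt[4]2)$, and $\cA=K=k(\theta)$ with $\theta^4=-8$.
\begin{itemize}
\item Both extensions are unramified of degree $4$, so the only nontrivial subextensions of $l/k$ are $k(\sqrt2)$ and $l$.
\item Neither embeds into $K$, because $\sqrt2\notin\QQ(\theta)$; the only quadratic subfield of $\QQ(\theta)$ is $\QQ(\sqrt{-2})$.
\item Nevertheless, over $K$ one has
\[
x^4-2=\bigl(x^2-\theta x+\tfrac{\theta^2}{2}\bigr)\bigl(x^2+\theta x+\tfrac{\theta^2}{2}\bigr),
\]
so $K\otimes_kl$ is a product of two fields and $\cM(K\otimes_kl)$ has two points.
\end{itemize}

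In Galois terms, let $M$ be a Galois closure of $l^h_s/k^h$ and $G=\mathrm{Gal}(M/k^h)$. Let $H$ be the subgroup fixing $l_s^h$ and $J$ the subgroup fixing $M\cap K^h$. Then (ii) says $JH=G$, whereas (iv) only says that no conjugate of $J$ lies in a proper subgroup containing $H$. In the example, $G=D_4$ acts on the four roots, $H$ is a diagonal reflection and $J$ an edge reflection; (iv) holds and $JH=G$ fails. Since everything is unramified over a complete field, no valuative argument can help.

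The paper's proof dismisses this step as ``classical'', but the classical fact is the global one. The reduction of $K^h\otimes_{k^h}l^h$ is a field for \emph{every} finite $l$ if and only if $k^h$ is separably closed in $K^h$, and that is all Theorem~\ref{multiplicativeth} uses. What your argument (and the paper's) actually establishes is (i)$\Leftrightarrow$(ii)$\Leftrightarrow$(iii)$\Rightarrow$(iv). The converse holds when $l^h_s/k^h$ is Galois, since then (iv) just says $M\cap K^h=k^h$. It also holds if (iv) is replaced by the condition $JH=G$. You should not try to prove (iv)$\Rightarrow$(ii) as stated.
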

\begin{proof}
By Corollary \ref{precor} $\cA_l$ is spectrally multiplicative if and only if $K\otimes l$ is. The completion preserves the analytic spectrum, hence the latter happens if and only if $\wh{K\otimes l}=\hatK\otimes_{\hatk}\hatl$ is spectrally multiplicative (there is no need to complete the tensor product because $\hatl/\hatk$ is finite). It remains to note that $\cM(\hatK\otimes_{\hatk}\hatl)=\coprod_i\cM(\hatK_i)$, where $(\hatK\otimes_\hatk\hatl)^\red=\prod_i\hatK_i$. The claim about henselizations follows because they are separably closed in the completions, and the equivalence of this condition with (iv) is classical.
\end{proof}

A couple of words about the reasons which forced us to choose the oddly looking formulation of the lemma.

\begin{rem}
(i) The lemma asserts that $\cA_l$ is not spectrally multiplicative if and only if $\Spec(K^h)$ is not geometrically irreducible over $k$. One cannot formulate a more global criterion, as $\Spec(\cA)$ itself can be geometrically irreducible (even when $k$ is complete and $\cA$ is affinoid).

(ii) We formulated (ii) and (iii) using henselizations and completions because a similar claim fails for $K\otimes l$ itself. For example, if $K=l$ is a quadratic subextension of $k^h/k$, then $l\otimes l=l\times l$, but this ring is spectrally multiplicative because the tensor norm has a kernel and $(l\otimes l)^\sp=l$ (e.g. use that the completion is $\hatk=\hatl$).
\end{rem}

\subsection{Main theorem}
At this stage we can already prove the main result about geometric spectral multiplicativity. Which is very similar to the usual theory of geometric irreducibility, up to the nuance with the completion or henselization.

\begin{theor}\label{multiplicativeth}
Let $k$ be a real valued field and let $\cA$ be a spectrally multiplicative seminormed $k$-algebra with $K=\Frac(\cA^\sp)$. Then $\cA$ is geometrically spectrally multiplicative if and only if $\hatk$ is separably closed in $\hatK$ (or $k^h$ is separably closed in $K^h$). In particular, $k^h$ is separably closed if and only if any spectrally multiplicative $k$-algebra is geometrically spectrally multiplicative.
\end{theor}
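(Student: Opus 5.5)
First, by Corollary~\ref{precor} I reduce to the case $\cA=K$, a real valued field extension of $k$. The first step is to reduce an arbitrary real valued extension $l/k$ to a finite one. Spectral multiplicativity of $K_l$ means that $\cM(K\otimes l)$ has a unique maximal point (Lemma~\ref{prelem}(iii)). The spectral seminorm of $K\otimes l$ is the supremum (filtered union) over finitely generated subextensions $l'\subseteq l$, because $K\otimes l'\into K\otimes l$ is an isometry by Lemma~\ref{isomlem}. Multiplicativity of $|\ |_\sp$ can be tested on pairs of elements, and any pair lies in some $K\otimes l'$. So it suffices to treat finitely generated $l/k$.

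Next I choose a transcendence basis $t_1\..t_m$ and write $l$ as a finite extension of $k(t_1\..t_m)$. I would like to pass through the purely transcendental part with Lemma~\ref{translem}, but that lemma needs an almost $k$-split valuation, which need not hold over a non-algebraically closed $k$. This is the main obstacle. The plan is to make the transcendental part split after a preliminary base change. Concretely, replace $k$ by a finite extension $k'$ chosen so that all the data relevant to $l$ (the relevant centers $a_i$ of the Gauss valuations) live over $k'$. Then bootstrap the statement one variable at a time: for $l=k(t)$ with an arbitrary valuation, I would write its completion as a point of the Berkovich line over $\hatk$. I would then use that the fibre of $\bbA^1_{\hatK}\to\bbA^1_{\hatk}$ over this point has a unique maximal point exactly when the residue field extension is geometrically irreducible in the analytic sense. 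In the end this again reduces to the finite case by approximating the point with $\hatk^a$-rational data.

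For a finite extension $l/k$, Lemma~\ref{premullem}(iv) says that $K_l$ is spectrally multiplicative if and only if no nontrivial separable subextension of $l^h/k^h$ embeds in $\hatK$. If $\hatk$ is separably closed in $\hatK$, equivalently $k^h$ is separably closed in $K^h$ (henselizations being separably closed in completions), then no such embedding exists, and this gives the ``if'' direction. Conversely, if $\alp\in\hatK\setminus\hatk$ is separable over $\hatk$, then by Krasner's lemma I can take $\alp$ algebraic over $k^h$, inside $K^h$. Setting $l=k(\alp')$ for a suitable approximation $\alp'$ of $\alp$ over $k$, with a valuation on $l$ matching the embedding, gives a nontrivial subextension embedding into $K^h$. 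So $K_l$ fails to be spectrally multiplicative.

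Finally, for the ``in particular'': if $k^h$ is separably closed, then it is separably closed in any $K^h$, so the criterion holds for every $\cA$. Conversely, if $k^h$ is not separably closed, take a nontrivial finite separable extension $l/k$ that is unibranch after henselization and set $\cA=l$. Then $l^h$ embeds into $\hatl=\hatK$, so $\hatk$ is not separably closed in $\hatK$, and by the criterion $\cA$ is spectrally multiplicative but not geometrically so.
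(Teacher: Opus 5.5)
Your reduction to $\mathcal{A}=K$, the passage to finitely generated $l/k$ via Lemma~\ref{isomlem}, the finite case via Lemma~\ref{premullem}, the converse direction and the ``in particular'' are all fine. The gap is in the transcendental step, which carries the real content of the theorem.

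Write $l$ as a finite extension of $k_m=k(t_1,\ldots,t_m)$, and grant that $K\otimes k_m$ is spectrally multiplicative with $L_m=\mathrm{Frac}((K\otimes k_m)^{\mathrm{sp}})$. To treat the finite extension $l/k_m$ by Lemma~\ref{premullem}, with $k_m,L_m$ in place of $k,K$, you still need that $\widehat{k_m}$ is separably closed in $\widehat{L_m}$. In other words, the hypothesis of the theorem must survive adjoining a transcendental element. You never formulate this. Your sentence that the fibre of $\mathbb{A}^1_{\widehat K}\to\mathbb{A}^1_{\widehat k}$ has a unique maximal point ``exactly when the residue field extension is geometrically irreducible in the analytic sense'' only restates what has to be proved.

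In the paper this step is Lemma~\ref{aclem}: if $k$ is algebraically closed, then $\widehat{k(t)}$ is algebraically closed in $\widehat{K(t)}$. Its proof needs genuine input:
\begin{itemize}
\item For Abhyankar points, the stability theorem makes finite extensions of $\widehat{k(t)}$ defectless, so they are detected by graded residue fields. The graded residue field of $k(t)$ is algebraically closed in that of $K(t)$.
\item For type 4 points, one approximates by the Gauss valuations $|\ |_{a_i,r_i}$. Krasner's lemma then moves a putative separable element of $\widehat{L}\setminus\widehat{l}$ into some $\widehat{L_i}$, contradicting the Abhyankar case.
\item In characteristic $p$, inseparable elements are excluded using $[\widehat{l}:\widehat{l}^{\,p}]=p$.
\end{itemize}

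Your proposed fix for the almost-split hypothesis of Lemma~\ref{translem} also fails as stated. Take a valuation on $k(t)$ of type 4 over $\widehat{k^a}$, or of type 1 with $t\mapsto a\in\widehat{k^a}$ transcendental over $k$. Then the centers $a_i$ cannot be chosen in a single finite extension $k'/k$. For instance, for $k=\mathbb{Q}_p$ any finite $k'$ is locally compact, so nested discs centered in $k'$ have a common center in $k'$, and the point would be of type 1, 2 or 3 with a center in $k'$.

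The paper instead passes all the way to $k^a$:
\begin{itemize}
\item Spectral multiplicativity is compatible with filtered colimits, a tool you already use. Combined with the finite case, this shows $K\otimes k^a$ is spectrally multiplicative.
\item One then replaces $k,K$ by $k^a$ and $\mathrm{Frac}((K\otimes k^a)^{\mathrm{sp}})$.
\item Over an algebraically closed field every valuation on $k(t)$ is almost split, so Lemma~\ref{translem} applies.
\item Lemma~\ref{aclem}, together with the same colimit argument over the base $k(t)$, shows that $K\otimes k(t)^a$ is spectrally multiplicative.
\item Induction on the number of elements $t_1,\ldots,t_n$ and a final colimit finish the proof.
\end{itemize}
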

\begin{proof}
By Corollary \ref{precor} it suffices to consider the case when $\cA=K$. Lemma~\ref{premullem} easily implies that if $k^h$ is not separably closed in $K^h$, then $K$ is not geometrically multiplicative over $k$. Conversely, assume that $k^h$ is separably closed in $K^h$. We should prove that $K\otimes l$ is spectrally multiplicative for a given real valued extension $l/k$. It suffices to prove this for a real valued extension $l'$ of $l$ because $K\otimes l$ embeds isometrically into $K\otimes l'$. Thus, replacing $l$ with $l^a$ we can assume that $l$ is algebraically closed.

The following fact follows from compatibility of henselization with filtered colimits and will be tacitly used in the sequel: if $l$ is the filtered colimit of its subfields $l_i$ such that each $K\otimes l_i$ is spectrally multiplicative, then also $K\otimes l$ is spectrally multiplicative. As a first application, combining it with Lemma~\ref{premullem}we obtain that $K\otimes k^a$ is spectrally multiplicative. Therefore we can replace $k$ and $K$ by $k^a$ and $(K\otimes k^a)^\sp$, achieving that $k$ is algebraically closed.

Next, we claim that for any $t\in l$ the seminormed ring $K\otimes k(t)^a$ is spectrally multiplicative. Recall that $K\otimes k(t)$ is spectrally multiplicative by Lemma~\ref{translem}, so $(K\otimes k(t))^\sp$ is a domain and we denote the fraction field by $L$. We can assume that $t\notin\hatk$ as otherwise the completion is just $\hatK$ and there is nothing to prove. Otherwise, $L=K(t)$ and the argument from the previous paragraph would allow us to conclude that also $K\otimes k(t)^a$ is spectrally multiplicative once we show that $\wh{k(t)}$ is separably closed in the completion of $\wh{K(t)}$. We postpone this to Lemma~\ref{aclem} below.

Finally, each $K\otimes k(t_1\..t_n)^a$ with $t_1\..t_n\in l$ is spectrally multiplicative by the above claim and the induction on $n$, and hence $K\otimes l$ is also spectrally multiplicative by the colimit argument.
\end{proof}

It remains to establish the result we have used above.

\begin{lem}\label{aclem}
Let $K/k$ be an extension of complete real valued fields and assume that $k$ is algebraically closed. Let $l=k(t)$ with an extended valuation and $L=K(t)=\Frac(K\otimes l)$ with the tensor valuation. Then $\hatl$ is algebraically closed in $\hatL$.
\end{lem}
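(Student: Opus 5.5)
The plan is to argue via the Berkovich classification of the point $y\in\cM(k[t])$ defined by the valuation of $l$. We may assume $t\notin k=\hatk$. Since $k$ is algebraically closed, the valuation of $l$ is almost $k$-split, so $y$ has type 2, 3 or 4. Let $x\in\cM(K[t])$ be the point given by the tensor valuation of $L$. This valuation is multiplicative by Lemma~\ref{translem}, so $\hatL=\cH(x)$ and $\hatl=\cH(y)$.

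Suppose $\alp\in\hatL$ is algebraic over $\hatl$, and set $l'=\hatl(\alp)$, which is a finite extension of complete fields. I want to show $l'=\hatl$. The idea is to interpret $l'$ geometrically: $l'=\cH(y')$ for a point $y'$ on a smooth $k$-curve $C$ lying over $y$. The embedding $l'\into\hatL$ then says that $x$ lifts to a point $x'\in C_K$ over $y'$ with $\cH(x')=\cH(x)$. By Lemma~\ref{isomlem} the field $K$ embeds isometrically into $K\otimes l'$. So the existence of such a lift forces the fibre $\cM(\cH(y)\wtimes_k K)$, which is the single point $x$, to be compared with $\cM(\cH(y')\wtimes_kK)$. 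This comparison will give $[l':\hatl]=1$ once I know that the extension $\cH(y')\wtimes K/\cH(y)\wtimes K$ has the same degree as $l'/\hatl$ and is a field. The field statement reduces to showing that $K\otimes l'$ is multiplicative.

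For the key step I treat the types separately.

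\emph{Types 3 and 4.} Here $\hatl/k$ is (up to a radius in $|k^\times|^\QQ$ for type 3) an immediate extension. Locally a curve over an algebraically closed field at such a point looks like a disc, so $l'=\cH(y')$ is again the completion of some $k(s)$ with an almost $k$-split valuation. Lemma~\ref{translem} then applies to $l'$ and gives that $K\otimes l'$ is multiplicative.

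\emph{Type 2.} After a translation and scaling, $y$ is the Gauss point $|\ |_{0,1}$. Then $\tilk_\gr\subset\till_\gr=\tilk_\gr(\tilt)$ and $\tilL_\gr=\tilK_\gr(\tilt)$. Because $\tilk_\gr$ is algebraically closed as a graded field, the standard fact that $\tilk(\tilt)$ is algebraically closed in $\tilK(\tilt)$ over an algebraically closed $\tilk$ shows that $\till_\gr$ is algebraically closed in $\tilL_\gr$. By the stable reduction theorem, type 2 points on curves over algebraically closed complete $k$ have defectless completed residue fields. Hence $l'/\hatl$ is defectless, and Lemma~\ref{grlem} lets me read everything off graded reductions. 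The inclusion $l'\subseteq\hatL$ gives $\till'_\gr\subseteq\tilL_\gr$, and $\till'_\gr$ is algebraic over $\till_\gr$. So $\till'_\gr=\till_\gr$, and defectlessness then gives $l'=\hatl$.

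The main obstacle is the step in types 3 and 4 asserting that $\cH(y')$ is again of the form $\wh{k(s)}$ with an almost split valuation. This also applies with $k$ merely complete and algebraically closed, where type 4 points genuinely occur. If the curve-theoretic argument proves awkward, my fallback is more direct. I would use a defect-type criterion in the spirit of Corollaries~\ref{rcor} and~\ref{mulcor}: show that every $\alp$ algebraic over $\hatl$ satisfies $d_{\alp/\hatl}=d_{\alp/\hatL}$. I would prove this by writing $y$ as a limit of Gauss points $|\ |_{a_i,r_i}$ and applying the type 2 argument uniformly along this sequence. Lemma~\ref{rlem} would be used to transfer the resulting $r$-orthogonality from $\hatl$ to $\hatL$.
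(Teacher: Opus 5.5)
Your type 2 argument is the paper's Abhyankar case: graded reduction of a Gauss extension plus the stability theorem. The paper handles type 3 the same way, since after translation $l=k_r$.

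\textbf{Type 4: a genuinely different route.} The paper stays elementary. It excludes inseparable extensions because $t^{1/p}\notin\hatL$. For a separable $\alpha\in\hatL$ it uses Krasner's lemma to pass to a root $\beta$ of a polynomial over $k(t)$. Continuity of root distances along the Gauss valuations $|\ |_i$ approximating $y$ then forces $\beta\in\hatL_i$ for large $i$, contradicting the Abhyankar case. Your fallback is close in spirit to this.

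You instead use that type 3 and type 4 points on curves over an algebraically closed field have annulus or disc neighbourhoods. Hence $l'\cong\wh{k(s)}$ with an almost split valuation, and Lemma~\ref{translem} makes $K\otimes_kl'$ multiplicative. This is exactly the alternative suggested in the remark after the paper's proof, via the uniformization theorem \cite[Theorem~6.3.1(i)]{temst}. Your route is shorter and treats separable and inseparable $l'$ uniformly. However, its load-bearing input is a deep theorem (semistable reduction or uniformization), whereas the paper needs only Krasner's lemma and stability. You should cite that theorem explicitly rather than describe it as ``locally looks like a disc''.

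\textbf{One false intermediate claim.} The conclusion you want from it is true, but the claim itself is not. In general $\cM(\cH(y)\wtimes_kK)$ is not the single point $x$, and $\cH(y)\wtimes_kK$ is not a field. For type 2, as soon as $\tilK\neq\tilk$, the fibre contains the residue discs over $\tilK\setminus\tilk$. For type 4, if $K$ contains a point $a$ of $\bigcap_iB(a_i,r_i)$, the fibre contains the whole closed $K$-disc of radius $\lim r_i$ around $a$. The point $x$ is only its unique maximal point.

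Replace this step by the following algebraic argument. Assume $K\otimes_kl'$ is multiplicative.
\begin{itemize}
\item By Lemma~\ref{isomlem}, $K\otimes_k\hatl\hookrightarrow K\otimes_kl'$ is an isometry. Hence the tensor norm on $\Frac(K\otimes_k\hatl)\otimes_{\hatl}l'$ is multiplicative, via $\|z/a\|=\|z\|/|a|$.
\item By Lemmas~\ref{cartlem} and~\ref{rlem}, the completion of this ring is $\hatL\otimes_{\hatl}l'$, so its tensor norm is also multiplicative. Thus $\hatL\otimes_{\hatl}l'$ is a domain, finite over $\hatL$, hence a field.
\item On the other hand, $l'\subseteq\hatL$ gives a surjection $\hatL\otimes_{\hatl}l'\to\hatL$. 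A surjection from a field is injective, so $[l':\hatl]=1$.
\end{itemize}
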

\begin{proof}
We start with the Abhyankar case. Assume that $r=\inf_{c\in k}|t-c|$ is attained. Then translating $t$ we can assume that actually $|t|=r$, and hence $l=k_r$ is the Gauss extension and then also $L=K_r$ is the Gauss extension of $K$. Therefore $\till_\gr=\tilk_\gr[\tilt]$ is algebraically closed in $\tilL_\gr=\tilK_\gr[\tilt]$ and the graded residue fields are preserved under passing to completions. On the other hand, $\hatl$ is defectless by the stability theorem, e.g. see \cite[Corollary~6.3.6]{temst}. So, any non-trivial extension $F/\hatl$ induces a non-trivial extension of the graded residue field $\till_\gr$ and hence $F$ cannot be contained in $\hatL$.

Assume now that the infimum is not attained and hence the restriction of the valuation onto $k[t]$ is the infimum of a decreasing sequence $|\ |_i=|\ |_{a_i,r_i}$ with $r_i$ decreasing and tending to $r$. If $\cha(k)=p$, then $[\hatl:\hatl^p]=p$ and hence any inseparable extension of $\hatl$ contains $t^{1/p}$ which is easily seen not to lie in $\hatL$. So, we should prove that $\hatl$ is separably closed in $\hatL$. Assume that this is not true and there exists $\alpha\in\hatL\setminus\hatl$ which is separable over $\hatl$. Let $P_\alpha\in\hatl[X]$ be its minimal polynomial. Since $k(t)$ is dense in $\hatl$, as a consequence of Krasner's lemma, we can replace $P$ with $Q\in l[X]$ such that $|P-Q|$ is small enough so that $Q$ has a root $\beta$ and $\hatl(\alpha)=\hatl(\beta)$. As earlier, let $r_{\beta/F}$ be the minimal distance between $\beta$ and its conjugates over $F$ and $d_{\beta/F}=\inf_{c\in F}|\beta-c|_F$. Additionally, let $l_i$ denote $k(t)$ with the valuation $|\ |_i$, let $L_i=K(t)=Frac(K\otimes l_i)$ with the corresponding tensor valuation, and, when needed, we will extend this valuation to $L^a=K(t)^a$. Let $r_\infty=r_{\beta/\hatl}=\lim_{i\rightarrow\infty}r_{\beta/\hatl_i}$. Choose $\gamma\in K(t)$ such that $|\gamma-\beta|<r_\infty/2$. Then $d_{\beta/\hatL_i}\le |\gamma - \beta|_i\xrightarrow{i\rightarrow\infty}|\gamma-\beta|$, and for $i$ big enough we have that $d_{\beta/\hatL_i}<r_\infty/2<r_{\beta/\hatl_i}$. By Krasner's Lemma $\beta\in \hatL_i$, which contradicts our results in the Abhyankar case.
\end{proof}

\begin{rem}
In the above proof we have dealt with fields of type 4 via a limit argument, which reduced the claim to Abhyankar case and the stability theorem. A possible alternative was to use the uniformization theorem \cite[Theorem~6.3.1(i)]{temst} for a finite extension of $\hatl$ and argue directly.
\end{rem}

\section{Geometric spectrality}
It remains to provide criteria of geometric spectrality. The main case to deal with is when $p=\cha(\tilk)>1$. We will see that otherwise any spectral algebra is geometrically spectral.


\subsection{$p$-versal extensions}
Let $k$ be a real valued field of residual exponential characteristic $p$. The reader can assume that $p>1$ as otherwise the discussion becomes vacuous. We say that $a\in k$ is {\em $p$-regular} if $|pa|<\inf_{c\in k}|a-c^p|$. We set $|a|_{k/p}=\inf_{c\in k}|a-c^p|$ if $a$ is $p$-regular, and $|a|_{k/p}=0$ otherwise. The latter correction is only needed to make things work smoother in the mixed characteristic, while in the equal characteristic case $|\ |_{k/p}$ is the residue seminorm on the group $k/k^p$, and $a$ is $p$-regular if and only if $a\notin\hatk^p$.

An extension of real valued fields $l/k$ will be called {\em weakly $p$-versal} if $|a|_{l/p}<|a|_{k/p}$ for any $p$-regular element $a\in k$. If, moreover, there exists $r<1$ such that $|a|_{l/p}\le r|a|_{k/p}$, then we say that $l/k$ is {\em $p$-versal} of {\em thickness} $r$. Here are few basic properties which are checked straightforwardly.

\begin{rem}
\label{rem:p-root}
(o) If $p=1$ (i.e. the residual characteristic is zero), there are no $p$-regular elements and any extension is $p$-versal.

(i) If $\cha(k)=p$, then $k^{1/p}/k$ is $p$-versal of unbounded thickness. An $r$-thick extension should be considered as an extension which contains a uniform deformation of $k^{1/p}$, though this is an analogy only, and it can freely happen, for example, that $l/k$ is purely transcendental.

(ii) Let $|p|<r=|\pi|<1$, then $l/k$ is $p$-versal of thickness $r$ if and only if the $r$-thick graded reduction $\kcirc_\gr/\pi\kcirc_\gr$ lies in the image of the Frobenius on $\lcirc_\gr/\pi\lcirc_\gr$.

(iii) If $l/k$ is weakly $p$-versal, then the graded reduction $\tilk_\gr$ lies in the image of the Frobenius on $\till_\gr$. The inverse implication holds only when $k$ is defectless.

(iv) Assume that $\cha(k)=p$. If $[k:k^p]<\infty$, then weak $p$-versality is equivalent to $p$-versality, but the notions differ when $k/k^p$ is infinite. A similar claim holds in the mixed characteristic case, once one introduces a corrrect analogue of the $p$-rank (e.g. as the minimal $r$ such that there exists a $p$-versal extension of degree $p^r$).
\end{rem}

\subsection{The key lemma}
Our main result concerning the geometric spectrality is that it can be tested on a single $p$-versal extension $l/k$. We will use the $p$-versality assumption through the following key lemma.

\begin{lem} \label{keylem}
Let $k$ be a complete real valued field of residual exponential characteristic $p$, let $\cA$ be a normed $k$-algebra and let $e_1,\ldots,e_n\in\cA$ be $s$-orthogonal elements, where $|p|<s^p<1$. Assume that there exists a $p$-versal extension $l/k$ of thickness $s^p$ such that the base change $\cA_l$ with the tensor norm $\|\ \|$ is spectral. Then the elements $e_1^p,\ldots,e_n^p$ are $s^p$-orthogonal.
\end{lem}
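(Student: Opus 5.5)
The plan is to approximate $v=\sum_i a_ie_i^p$, with $a_i\in k$, by a $p$-th power $w^p$ computed in $\cA_l$, where $w=\sum_ib_ie_i$ and $b_i\in l$ approximate $p$-th roots of the $a_i$. Spectrality of $\|\ \|$ then gives $\|w^p\|=\|w\|^p$. The $s$-orthogonality of the $e_i$ survives the base change by Lemma~\ref{rlem}, so $\|w\|\ge s\max_i|b_i|\cdot|e_i|$, and hence $\|w^p\|\ge s^p\max_i|b_i|^p|e_i|^p$. Since $\cA\into\cA_l$ is an isometry by Lemma~\ref{isomlem}, $|v|=\|v\|$. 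It therefore suffices to show $\|v-w^p\|$ is small compared with $s^pM$, where $M=\max_i|a_i|\cdot|e_i|^p$. Note that $|e_i^p|\le|e_i|^p$, so proving $|v|\ge s^pM$ is enough. Since the desired inequality is non-strict, I will aim for $|v|\ge s^pM(1-\varepsilon)$ for every $\varepsilon>0$ and let $\varepsilon\to 0$.

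\emph{Step 1 (normalization over $k$).} Argue by contradiction, assuming $|v|<s^pM$, and first improve the coefficients using $k$ alone. For each $i$ choose $c_i\in k$ with $|a_i-c_i^p|$ close to $\inf_{c\in k}|a_i-c^p|$. Put $u=\sum_ic_ie_i\in\cA$ and replace $v$ by $v-u^p$ (up to the binomial cross terms). In mixed characteristic these cross terms carry a factor $p$, so they are bounded by $|p|\cdot(\cdot)$, which is harmless because $|p|<s^p$. Spectrality over $k$ is not available, so $u^p$ is controlled only from above, by $|u|^p$. This step is only meant to reduce to coefficients with $|a_i|$ essentially equal to $|a_i|_{k/p}$, or with $a_i$ non-$p$-regular and so within $|pa_i|$ of a $p$-th power.

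\emph{Step 2 (using $p$-versality).} For $p$-regular $a_i$, thickness $s^p$ provides $b_i\in l$ with $|a_i-b_i^p|\le s^p|a_i|_{k/p}(1+\varepsilon)$. For non-$p$-regular $a_i$, the error is at most $|p a_i|(1+\varepsilon)$ already with $b_i\in k$. Expand $w^p=\sum_ib_i^pe_i^p+p(\ldots)$, so that
\[
\|v-w^p\|\le\max\bigl(\max_i|a_i-b_i^p|\cdot|e_i|^p,\ |p|\cdot\|w\|^p\bigr).
\]
Comparing this with $\|w^p\|\ge s^p\max_i|b_i|^p|e_i|^p\approx s^pM$ through the ultrametric inequality applied to $v$, $w^p$ and $v-w^p$ should contradict $|v|<s^pM$.

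\emph{Main obstacle.} The difficulty is that the two estimates meet at the same order $s^pM$. The lower bound on $\|w^p\|$ loses exactly $s^p$, and so does the versality error, so a naive comparison gives no contradiction. I expect the crux to be getting a genuine gain from Step 1. After subtracting the best $k$-approximations, the error $|a_i-b_i^p|$ is $s^p$ times $|a_i|_{k/p}$, which should now be measured against the new, smaller leading term and not against $|a_i|$. I would first try to set this up as an induction or iteration, alternately subtracting approximations by $p$-th powers over $k$ and over $l$, tracking the thickness at each stage. This resembles working in the $s^p$-thick graded reduction $\cAcirc_\gr/\pi\cAcirc_\gr$ with $|\pi|=s^p$, as in Remark~\ref{rem:p-root}(ii). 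There the statement becomes: if $\sum_i\tila_i\tile_i^p=0$ mod $\pi$, then writing $\tila_i=\tilb_i^p$ via Frobenius surjectivity gives $(\sum_i\tilb_i\tile_i)^p=0$. Reducedness coming from spectrality then forces $\sum_i\tilb_i\tile_i$ to vanish mod a thickness $s$ ideal, and $s$-orthogonality forces all $\tilb_i$ to vanish. The discrete case would be reduced to this one via Lemma~\ref{descent}.
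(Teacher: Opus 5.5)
Your opening paragraph is exactly the paper's argument. You then abandon it at the decisive step, and nothing you put in its place closes the proof, so as written the proposal has a genuine gap. The missing point is that the direct ultrametric comparison \emph{does} work once the approximate roots are chosen with a strict error bound. The paper takes $b_i\in l$ with $|b_i^p-a_i|<s^p|a_i|$. Then $|b_i|^p=|a_i|$, so $\|w\|\ge sM^{1/p}$ and $\|w^p\|\ge s^pM$. Moreover $\|v-\sum_ib_i^pe_i^p\|<s^pM$, and the cross terms of $w^p$ have norm at most $|p|M<s^pM$. Hence $\|v-w^p\|<s^pM\le\|w^p\|$, which forces $\|v\|=\|w^p\|\ge s^pM$. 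Nothing has to beat a quantity of the same size; the error only has to lie strictly below the lower bound.

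Strictness is automatic for coefficients that are not $p$-regular (in $k$ or in $l$), since then $\inf_c|a_i-c^p|\le|pa_i|<s^p|a_i|$. For $p$-regular ones it follows from $|a_i|_{l/p}\le r|a_i|_{k/p}\le r|a_i|$ as soon as the thickness $r$ is $<s^p$, or whenever $|a_i|_{k/p}<|a_i|$. Your worry is legitimate only in the exact boundary case $r=s^p$ with $|a_i|_{k/p}=|a_i|$, where the infimum need not be beaten. The paper's proof tacitly passes over this case, but it is harmless: in Theorem~\ref{spectralth} one chooses $s$ with $r^{1/p}<s$. The right fix is to use this slack, not to iterate.

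The remedies you propose would not have worked:
\begin{itemize}
\item \textbf{Step~1 gains nothing.} If $c_i^p$ is a near-best approximation of $a_i$ by $p$-th powers of $k$, then $a_i'=a_i-c_i^p$ satisfies $|a_i'|_{k/p}\approx|a_i'|$ (in characteristic $p$ even $|a_i'|_{k/p}=|a_i|_{k/p}$ exactly). That is precisely the worst case for the versality estimate, so iterating makes no progress.
\item \textbf{Spectrality over $k$ is available.} Your remark to the contrary is wrong: $\cA\hookrightarrow\cA_l$ is an isometry, as you use yourself, so $\cA$ is spectral.
\item \textbf{The thick graded reduction sketch has the same defect as the non-strict comparison.} Vanishing modulo $\pi$ with $|\pi|=s^p$ only encodes a bound $\le s^pM$. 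Spectrality and $s$-orthogonality then give only $\max_i\|b_ie_i\|\le M^{1/p}$, which is no contradiction. The $\tilde b_i$ are forced to vanish only if every inequality is strict, and that is the paper's argument again.
\item \textbf{Letting $\varepsilon\to0$ cannot help.} An error bound $(1+\varepsilon)s^pM$ gives no lower bound on $\|v\|$ at all.
\end{itemize}
A minor slip: the cross terms of $w^p$ are bounded by $|p|\max_i\|b_ie_i\|^p$, not by $|p|\,\|w\|^p$.
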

\begin{proof}
We need to prove that for any $k_1,\ldots,k_n\in k$ the element $x=\sum_{i=1}^nk_ie_i^p$ satisfies the inequality $\|x\|\ge s^p\rho$, where $\rho=\max\limits_{1\le i\le n}\|k_ie_i^p\|$.

Let us first illustrate the idea in the model case, when $\cha(k)=p$ and $l=k^{1/p}$. Here we can simply use that $e_1\..e_n$ are $s$-orthogonal in $\cA_l$ by Lemma~\ref{rlem}, and hence $x^{1/p}=\sum_{i=1}^nk_i^{1/p}e_i$ satisfies the inequality $\|x^{1/p}\|\ge s\max\|k_i^{1/p}e_i\|=s\rho^{1/p}$. The claim follows since $\|x\|=\|x^{1/p}\|^p$ by the spectrality of $\cA_l$.

The same method works in general, once we use the $p$-versality to choose approximate roots $l_i\in l$ and check that the $p$-th power is additive up to error terms which are below our thresholds. Indeed, choose $l_i$ such that $|l^p_i-k_i|<s^p|k_i|$ and set $y=\sum_{i=1}^nl_ie_i$. In particular, $\|y\|\ge s\rho^{1/p}$ and then $\|y^p\|\ge s^p\rho$ by the spectrality. On the other hand, $\|x-\sum_il_i^pe_i^p\|<s^p\rho$ and
$$\left\|y^p-\sum_il_i^pe_i^p\right\|\le\max_i\|pl_i^pe_i^p\|=\rho|p|<s^p\rho$$
hence also $\|x\|=\|y^p\|\ge s^p\rho$, as claimed.
\end{proof}

\subsection{Main theorem}
Now we can prove our main result concerning geometric spectrality.

\begin{theor}\label{spectralth}
Let $k$ be a real valued field of residual exponential characteristic $p$ and let $\cA$ be a weakly cartesian normed $k$-algebra. Then $\cA$ is geometrically spectral if and only if there exists a $p$-versal extension $l/k$ such that $\cA_l$ is spectral.
\end{theor}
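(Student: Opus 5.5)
The plan is to prove the easy direction by exhibiting one $p$-versal extension, and the hard direction by combining Lemma~\ref{keylem} with Lemma~\ref{rlem} to get $\|x^p\|\ge s^p\|x\|^p$ for every $s<1$ and every $x\in\cA_K$.

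\emph{Only if.} It suffices to exhibit a single $p$-versal extension $l/k$, since geometric spectrality then gives that $\cA_l$ is spectral. Take $l=k^a$ with any extension of the valuation. Every $a\in k$ has a $p$-th root in $l$, so $|a|_{l/p}=0$ for every $p$-regular $a\in k$. Hence $l/k$ is $p$-versal of any thickness.

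\emph{If.} Assume $l/k$ is $p$-versal of thickness $r$ and $\cA_l$ is spectral. First, $\cA\into\cA_l$ is an isometry by Lemma~\ref{isomlem}, so $\cA$ is spectral; in particular $|e^p|=|e|^p$ for $e\in\cA$.

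Next, we reduce to complete $k$ so that Lemma~\ref{keylem} applies. Replace $k$, $l$, $\cA$ by $\hatk$, $\hatl$, $\cA_{\hatk}$. This is harmless: these base changes are isometric, tensor norms are compatible with the completions, and weak cartesianity passes to $\cA_\hatk$ by Lemma~\ref{rlem}. One must also check that $|\ |_{k/p}$ and $|\ |_{l/p}$ do not change under completion, so that $\hatl/\hatk$ is still $p$-versal of thickness $r$. I expect this bookkeeping to be the main technical nuisance, in particular the $p$-regularity correction in mixed characteristic; I would handle it by a density argument, approximating $a\in\hatk$ by elements of $k$ within error smaller than $|pa|$.

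Now fix a real valued extension $K/k$ and $x\in\cA_K$. Write $x=\sum_i c_ie_i$ with $c_i\in K$ and $e_1\..e_n$ a basis of a finite-dimensional subspace $V\subseteq\cA$. By weak cartesianity we may choose the $e_i$ to be $s$-orthogonal for any prescribed $s<1$; take $s$ with $\max(r,|p|)<s^p<1$. Since thickness $r$ implies thickness $s^p$ for $s^p\ge r$, Lemma~\ref{keylem} shows that $e_1^p\..e_n^p$ are $s^p$-orthogonal over $k$. By Lemma~\ref{rlem} they remain $s^p$-orthogonal in $\cA_K$ over $K$.

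Expand $x^p=\sum_i c_i^pe_i^p+z$, where $z$ is the sum of the mixed multinomial terms. All their coefficients are divisible by $p$, so
$$\|z\|\le|p|\max_i(|c_i|\,|e_i|)^p .$$
Set $\mu=\max_i|c_i|\,|e_i|$, so that $\mu\ge\|x\|$. Using $|e_i^p|=|e_i|^p$, the orthogonality gives
$$\Big\|\sum_i c_i^pe_i^p\Big\|\ge s^p\mu^p>|p|\mu^p\ge\|z\| .$$
Hence $\|x^p\|\ge s^p\mu^p\ge s^p\|x\|^p$. The same bound holds for all $s$ close to $1$, with the basis chosen depending on $s$, so $\|x^p\|\ge\|x\|^p$. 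Together with $\|x^p\|\le\|x\|^p$ this gives $\|x^p\|=\|x\|^p$. Applying this to powers yields $\|x^{p^m}\|=\|x\|^{p^m}$, and therefore $\|x\|_\sp=\|x\|$. So $\cA_K$ is spectral for every $K$, i.e.\ $\cA$ is geometrically spectral.

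If $p=1$, Remark~\ref{rem:p-root}(o) says every extension is $p$-versal, so the argument above reads with $p=1$ and is trivial. Hence the claim is that every weakly cartesian spectral $\cA$ is geometrically spectral, which the paper says it will see.
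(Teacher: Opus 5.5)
\textbf{The case $p>1$.} Your treatment of $p>1$ is correct and is essentially the paper's argument. You take an $s$-orthogonal basis from weak cartesianity, use Lemma~\ref{keylem} to make $e_1^p,\ldots,e_n^p$ $s^p$-orthogonal, use Lemma~\ref{rlem} to pass to $K$, and finish with the multinomial estimate. You argue directly with $s\to 1$ rather than by contradiction. Your reduction to complete $k$ and your explicit use of $|e_i^p|=|e_i|^p$ are extra care that the paper skips.

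\textbf{The gap: $p=1$.} The genuine gap is the case $p=1$, i.e.\ residue characteristic zero, which is part of the statement. There your argument does not ``read trivially''; it says nothing at all. Since $x^p=x$, the identity $\|x^p\|=\|x\|^p$ is a tautology that cannot be iterated to control $\|x^n\|$. Lemma~\ref{keylem} is not even applicable, since $|p|<s^p<1$ would read $1<s<1$. No other prime $q$ can be substituted either, because $|q|=1$ and the mixed multinomial terms are then not small. Your last sentence only restates what has to be proved (a weakly cartesian spectral $\cA$ of residue characteristic zero is geometrically spectral) and defers it to an announcement in the paper. So this half of the theorem remains unproved.

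\textbf{What is missing.} You need a separate argument that does not use Frobenius. The paper notes that spectrality of $\cA_K$ may be checked after enlarging $K$ (Lemma~\ref{isomlem}) and along filtered colimits of subfields. This reduces the claim to two steps: $K/k$ finite, and $K=k(t)$ with $k$ algebraically closed.

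The transcendental step is Lemma~\ref{translem}. The valuation of $k(t)$ is then an infimum of Gauss valuations, and for a Gauss valuation power-multiplicativity of $\cA\otimes_kk(t)$ follows by comparing leading coefficients of $x$ and $x^n$.

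For the finite step, use that in residue characteristic zero a finite extension of a complete (or henselian) field is defectless. So $K$ has an orthogonal basis over $k$, and the graded reduction of $\cA_K$ is the base change of the graded reduction of $\cA$ along the extension of graded residue fields. This is an extension of graded fields of characteristic zero, so it preserves reducedness, hence spectrality. This is the argument of Lemma~\ref{grlem}(i).
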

\begin{proof}
The direct implication is obvious. Conversely, assume that $l/k$ is $p$-versal of thickness $r$ and $\cA_l$ is spectral. Let $K/k$ be any extension of real valued fields. We need to prove that $\cA_K$ is also spectral. Assume first that $p=1$. As in the proof of Theorem~\ref{multiplicativeth}, we can replace $K$ by a larger field and it suffices to prove the claim for a cofinal family of subextensions of $K$. Therefore the claim reduces to the two cases: $K/k$ is finite and $K=k(t)$ and $k$ is algebraically closed. The second case is covered by Lemma~\ref{translem}. The first case follows from Lemma~\ref{grlem}(i) because $K/k$ is defectless and $\tilk_\gr$ is a graded field of characteristic zero and hence reduced graded $\tilk_\gr$-algebras are geometrically reduced.

Assume now that $p>1$. In this case it is enough to prove that any $x\in\cA_K$ satisfies $\|x^p\|=\|x\|^p$, because then also $\|x\|^{p^n}=\|x\|$ and hence the norm is power-multiplicative. Assume by contradiction that $\|x^p\|=\gamma\|x\|^p$, where $\gamma<1$. Fix an orthogonality threshold $s<1$ such that $\max\{\gamma^{1/p}, r^{1/p}\}<s$. Choose a finite-dimensional subspace $V\subseteq\cA$ such that $x\in V_K$. By our assumption and Lemma~\ref{cartlem}, $V$ possesses an $s$-orthogonal basis $(e_1,\ldots,e_n)$

Consider the presentation $x=\sum_{i=1}^n a_ie_i$ with $a_i\in K$ and let $\rho=\max\|a_ie_i\|$. Since $e_1\..e_n$ are $s$-orthogonal over $K$ by Lemma~\ref{rlem}, $\|x\|\ge s\rho$. Next, consider the element $y=\sum_{i=1}^na_i^pe_i^p$ and note that $\|x^p-y\|\le\rho^p|p|$. On the other hand, the elements $e_1^p\..e_n^p\in\cA$ are $s^p$-orthogonal over $k$ by Lemma~\ref{keylem} and using Lemma~\ref{rlem} once again we obtain that $\|y\|\ge s^p\rho^p>\rho^p|p|$. This yields a contradiction to the choice of $\gamma$ since $$\|x^p\|=\|y\|\ge s^p\rho^p\ge s^p\|x\|^p>\gamma\|x\|.$$
\end{proof}

Recall that $k$ is perfectoid if it is non-discretely valued, complete, of positive residual characteristic $p$ and the Frobenius is surjective on $\kcirc/\pi\kcirc$ for some (and then any) $\pi\in\kcirccirc$ with $|p|\le|\pi|$. Thus, $k/k$ is $p$-versal if and only if either $p=1$ or $\hatk$ is perfectoid, and we obtain the following corollary.

\begin{cor}
Let $\cA$ be a spectral normed algebra over a real valued field $k$ of residual exponential characteristic $p$. Assume that either $p=1$ or $\hatk$ is perfectoid. Then $\cA$ is geometrically spectral.
\end{cor}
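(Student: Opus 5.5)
The plan is to derive the corollary directly from Theorem~\ref{spectralth}, taking $l=k$ (or $l=\hatk$) as the $p$-versal extension. Since $\cA$ is assumed spectral, $\cA_k=\cA$ is spectral. What remains is to check the two hypotheses of the theorem: that $k/k$ is $p$-versal, and that $\cA$ is weakly cartesian.

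\emph{Step one: $p$-versality.} If $p=1$, Remark~\ref{rem:p-root}(o) says there are no $p$-regular elements, so every extension is $p$-versal and we are done. Now let $p>1$ with $\hatk$ perfectoid. The quantity $|a|_{k/p}=\inf_{c\in k}|a-c^p|$ does not change when $k$ is replaced by $\hatk$, by density, so we may compute it in $\hatk$. Take $|p|\le|\pi|<1$. Surjectivity of Frobenius on $\hatk^\circ/\pi\hatk^\circ$, after rescaling by powers of an element of small absolute value, gives the following: for every $a\in\hatk$ there is $c$ with $|a-c^p|\le|\pi||a|$. Iterating once more, we also get $|a-c^p|\le|\pi|\,|a|_{k/p}$, up to the harmless adjustment of approximating $a$ by an element nearly realizing the infimum. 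Hence $\inf_c|a-c^p|\le|\pi|\inf_c|a-c^p|$, which forces $|a|_{k/p}=0$ whenever $|pa|<|a|_{k/p}$. So there are no $p$-regular elements, and $k/k$ is trivially $p$-versal of any thickness. This is exactly the observation stated just before the corollary, and I would simply cite it. The subtle point is the normalization using the value group, which is dense because the perfectoid field is non-discretely valued; this lets us rescale $a$ into the unit ball without loss.

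\emph{Step two: weak cartesianity.} I expect this to be the main obstacle, since Theorem~\ref{spectralth} assumes $\cA$ is weakly cartesian and the corollary does not say so. By Lemma~\ref{cartlem}, a finite-dimensional subspace $V\subseteq\cA$ is $r$-cartesian if and only if $V\to\hatV$ is injective. Over a complete field $k$ every finite-dimensional normed space is cartesian-equivalent, so when $k=\hatk$ there is nothing to prove. In general I would first base change to $\hatk$. Spectrality of $\cA$ passes to $\cA_{\hatk}$, because the spectral seminorm is a maximum over $\cM(\cA)=\cM(\cA_{\hatk})$ and $\cA\otimes_k\hatk$ has the same completion as $\cA$. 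Moreover, any real valued extension $K/k$ embeds into one containing $\hatk$, and by Lemma~\ref{isomlem} it suffices to test spectrality there. So we reduce to $k$ complete, where $\cA$ is automatically weakly cartesian.

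With both hypotheses verified, Theorem~\ref{spectralth} applies to $l=k$ and gives that $\cA$ is geometrically spectral. If the theorem's proof is read carefully, the $p=1$ case could even be quoted directly from its first paragraph.
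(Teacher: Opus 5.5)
Your proposal is correct and follows the paper's route. The paper only observes that $k/k$ is $p$-versal exactly when $p=1$ or $\widehat{k}$ is perfectoid, since there are then no $p$-regular elements, and applies Theorem~\ref{spectralth} with $l=k$.

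Your Step two supplies a point that this one-line derivation skips, and the point is not vacuous. Theorem~\ref{spectralth} assumes $\cA$ weakly cartesian, but the corollary does not. For instance, take $t\in\widehat{k}\setminus k$: the normed field $\cA=k(t)\subset\widehat{k}$ is spectral, yet the span of $1,t$ has $1$-dimensional completion, so by Lemma~\ref{cartlem} it is not $r$-cartesian for any $r>0$. Your reduction to $\widehat{k}$ handles this.

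One correction is needed there. $\cA\otimes_k\widehat{k}$ need not be normed: in the same example $t\otimes 1-1\otimes t$ has seminorm $0$. So $\cA\otimes_k\widehat{k}$ itself is not weakly cartesian, and you should apply the theorem to its separated quotient. This is the image of $\cA\otimes_k\widehat{k}$ in $\widehat{\cA}$, a spectral normed $\widehat{k}$-algebra that is weakly cartesian because $\widehat{k}$ is complete. Then note that for any $K\supseteq\widehat{k}$ the tensor seminorm on $\cA\otimes_kK=(\cA\otimes_k\widehat{k})\otimes_{\widehat{k}}K$ is pulled back from the base change of that quotient.

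A minor remark on Step one, which you intend to cite anyway. In mixed characteristic the second approximation step produces binomial cross terms of size at most $|p|\max(|c_0|,|c_1|)^p$, which is essentially $|pa|$. So the inequality you actually get is $\inf_c|a-c^p|\le\max(|\pi|\inf_c|a-c^p|,\,|pa|)$. This still rules out $p$-regular elements.
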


\begin{rem}
Assume that a real valued extension $l/k$ is such that the following condition holds: if $\cA$ is a normed $k$-algebra such that $\cA_l$ is spectral, then $\cA$ is geometrically spectral. We know that any $p$-versal $l$ satisfies this property. Conversely, already considering the case when $\cA$ is an extension of the form $k(a^{1/p})$ one obtains from Lemma~\ref{mullem} that $l/k$ has to be weakly $p$-versal. When $k$ has finite $p$-rank this completely characterizes the fields $l$ which can test geometric spectrality (see Remark~\ref{rem:p-root}(iv)). We did not study how (and if) the $p$-versality condition can be weakened when the $p$-rank is infinite.
\end{rem}

\subsection{Geometric multiplicativity}
Combining Theorems \ref{multiplicativeth} and \ref{spectralth} we obtain the following result.

\begin{theor}\label{multth}
Let $k$ be a real valued field of residual exponential characteristic $p$ and let $\cA$ be a weakly cartesian multiplicative normed $k$-algebra. Then $\cA$ is geometrically multiplicative if and only if it is multiplicative, there exists a $p$-versal extension $l/k$ such that $\cA_l$ is spectral and $k^h$ is separably closed in $K^h$, where $K=\Frac(\cA)$. In particular, if $k$ is algebraically closed, then $\cA$ is geometrically multiplicative if and only if it is multiplicative.
\end{theor}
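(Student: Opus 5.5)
The plan is to split geometric multiplicativity into its two components and then apply the two main theorems separately. By the analogue recalled in the introduction (item (iii) of \S1.2), a seminorm is multiplicative if and only if it is spectral and spectrally multiplicative. Applying this to every base change $\cA_l$ shows that $\cA$ is geometrically multiplicative if and only if it is both geometrically spectral and geometrically spectrally multiplicative. I would verify this equivalence directly from the definitions: $|\ |_\sp$ multiplicative together with $|\ |=|\ |_\sp$ gives multiplicativity, and the converse is trivial.

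For the spectral component, Theorem~\ref{spectralth} applies verbatim because $\cA$ is assumed weakly cartesian. It says that $\cA$ is geometrically spectral if and only if $\cA_l$ is spectral for some $p$-versal $l/k$.

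For the spectrally multiplicative component, I would use that $\cA$ is multiplicative. Then its norm is spectral, so $\cA^\sp=\cA$; the norm is a norm, so the kernel is trivial. Hence $\cA$ is spectrally multiplicative and $K=\Frac(\cA^\sp)=\Frac(\cA)$. Theorem~\ref{multiplicativeth} now says that $\cA$ is geometrically spectrally multiplicative if and only if $k^h$ is separably closed in $K^h$. Combining the two components yields the main equivalence. The clause "it is multiplicative" is necessary, since the case $l=k$ of geometric multiplicativity already forces it.

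For the final assertion, assume $k$ is algebraically closed. Then $k^h=k$ is separably closed, so it is separably closed in $K^h$ and the third condition holds automatically. For the second condition, I expect the main subtlety to be exhibiting a $p$-versal extension $l$ with $\cA_l$ spectral. The simplest choice is $l=k$ itself. By the discussion preceding the perfectoid corollary, $k/k$ is $p$-versal if either $p=1$ or $\hatk$ is perfectoid. The completion of an algebraically closed field is algebraically closed, hence perfectoid when the residual characteristic is positive: the valuation is non-discrete and Frobenius is surjective modulo $\pi$. This step is justified only when the value group is nontrivial. If the valuation is trivial, I would instead check $p$-versality directly: there are no $p$-regular elements because $k=k^p$, so $|a|_{k/p}=0$ and the condition is vacuous. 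In either case $\cA_k=\cA$ is spectral because it is multiplicative, so the second condition holds. Thus for algebraically closed $k$, multiplicativity alone implies geometric multiplicativity, recovering Poineau's result.
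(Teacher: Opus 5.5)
Your proposal is correct and is the paper's own argument: the paper proves Theorem~\ref{multth} simply by combining Theorem~\ref{spectralth} and Theorem~\ref{multiplicativeth}. It does so through the decomposition ``multiplicative $=$ spectral $+$ spectrally multiplicative'' applied to each $\cA_l$, using $\cA^\sp=\cA$ and $K=\Frac(\cA)$, exactly as you do. For the final clause, the vacuity check you reserve for the trivially valued case works for every algebraically closed $k$: each element is a $p$-th power, so there are no $p$-regular elements and $k/k$ is $p$-versal. That makes the perfectoid detour unnecessary, and it also covers the trivially valued characteristic-$p$ case, where the paper's remark that ``$k/k$ is $p$-versal iff $p=1$ or $\hatk$ is perfectoid'' is not literally accurate.
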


\bibliographystyle{amsalpha}
\bibliography{universal_norms}

\end{document}